\newcommand{\rom}[1]{\uppercase\expandafter{\romannumeral#1}}
\newcommand{\beq}{\begin{equation}}
\newcommand{\eeq}{\end{equation}}
\newcommand{\bal}{\begin{align}}
\newcommand{\eal}{\end{align}}
\newcommand{\baln}{\begin{align*}}
\newcommand{\ealn}{\end{align*}}
\theoremstyle{plain}\newtheorem{theorem}{Theorem}[section]
\theoremstyle{definition}\theoremstyle{plain}\newtheorem{corollary}{Corollary}
\newtheorem{proposition}{Proposition}[section]
\newtheorem{example}{Example}[section]
\newcommand{\bm}{\boldsymbol}
\begin{document}

\title{Asymptotically stable Particle-in-Cell methods for the magnetized Vlasov--Poisson equations in orthogonal curvilinear coordinates}

\author[1]{Anjiao Gu} 
\author[2,3]{Yajuan Sun \thanks{Corresponding author: sunyj@lsec.cc.ac.cn.}} 

\affil[1]{Department of mathematics, Institute of Natural Sciences and MOE-LSC. Shanghai Jiao Tong University, Shanghai, 200240, China}

\affil[2]{State Key Laboratory of Mathematical Sciences, Academy of Mathematics and Systems Science, Chinese Academy of Sciences, Beijing 100190, China}

\affil[3]{School of Mathematical Sciences, University of Chinese Academy of Sciences, Beijing, 100190, China}

\renewcommand*{\Affilfont}{\small\it}
\renewcommand\Authands{ and }

\maketitle

\abstract{
In high-temperature plasma physics, strong magnetic fields are essential for confining charged particles.
Consequently, classical mathematical models of such systems must take into account the external magnetic field effects.
A key governing equation is the magnetized Vlasov--Poisson system, which exhibits multiscale dynamics and rich physical properties.
Thus, developing the structure-preserving numerical methods and studying its capability of maintaining these intrinsic properties over long-time simulations is therefore critically important.
This paper presents a general framework for constructing and analyzing structure-preserving methods in orthogonal curvilinear coordinates.
We prove that in these coordinates the Poisson-bracket structure is retained under appropriate finite element discretizations.
However, the resulting Hamiltonian systems in transformed coordinates typically can't be decomposed as several subsystems which can be solved exactly.
To address this, we propose a semi-implicit numerical scheme that can still maintain the  favorable stability properties inherited by the system.
The effectiveness of the new derived numerical methods is demonstrated in application to strongly magnetized systems, and the rigorous asymptotic stability analysis are provided.

{\bf{Keywords:}} Magnetized Vlasov--Poisson equation, Curvilinear coordinate, Particle method, Asymptotically stable

\section{Introduction}

Plasma physics fundamentally involves the dynamical study of charged  multi-particle  interacting with electromagnetic fields.
Typically, these systems are treated classically, neglecting relativistic effects.
At the microscopic scale, the kinetic behavior of individual charged particles in electromagnetic fields is governed by Newtonian mechanics.
However, for macroscopic plasma dynamics where particle densities become  high and the collisions become significant,  a conducting fluid description proves more appropriate.
This constitutes the framework of magnetohydrodynamics.
When particle velocity distributions require explicit resolution, kinetic theory becomes essential.
The Vlasov equation emerges as a key model in this regime, providing a statistical description of collisionless plasma dynamics.

Plasma systems exhibit far more complex physical properties than ordinary fluids, with broad applications across disciplines.
This complexity makes the investigation of their kinetic behavior both challenging and critically important.
While numerical algorithms for the Vlasov equation have been studied extensively since the 1960s, structure-preserving methods leveraging its geometric properties have emerged as a significant research focus only in recent decades~\cite{Filbet2003N,Morrison2016S,XJY2018S}. These geometric algorithms are particularly valuable for long-term particle simulations, as they excel at preserving the key conservative quantities of the original systems~\cite{FK1985,FK2010}.
The rich geometric structure of Vlasov systems naturally motivates the development of structure-preserving numerical methods.
For instance, the Vlasov--Maxwell system possesses both variational~\cite{Squire2012G,Evstatiev2013V,Martin2021Variational} and Hamiltonian structures ~\cite{Morrison1980T,Marsden1982T,Morrison2013A}. The two formulations  can  associate directly to a  set of conserved quantities.

While the majority of existing work on geometric algorithms for the Vlasov equation has focused on Cartesian coordinate systems~\cite{Nicolas2015H,XJY2015E,Qin2016C,HY2016,Kraus2017,Casas2017,LYZ2019}, many physical problems are more naturally characterized by a configuration-specific coordinates.
In magnetic confinement fusion, for instance, the electromagnetic fields are typically described using toroidal coordinates.
Such coordinate systems often reveal important symmetries in physical quantities that are not immediately apparent in Cartesian representations.
This observation motivates our effort to extend both geometric formulations and their corresponding numerical frameworks from Cartesian to general curvilinear coordinates.
However, the construction of structure-preserving algorithms in curvilinear coordinates presents non-trivial challenges that cannot be resolved through simple coordinate transformations of existing Cartesian-based methods.
The development of such algorithms requires fundamentally new approaches to preserve geometric structures under coordinate transformations.
Within curvilinear coordinate frameworks, particle methods have been well-established in the literature~\cite{Hockney1988, Birdsall1991}.
More recently, these approaches have been successfully extended to the Vlasov--Maxwell system~\cite{Eric2021Geometric,Eric2021Perfect}, demonstrating the feasibility of structure-preserving methods in non-Cartesian settings.
On the other hand, the simulation of kinetic-scale plasma physics poses a fundamental challenge due to the system's intrinsic multi-scale characteristics.
Key physical scales - including the Debye length, particle gyroradius, and collision mean free paths - typically span several orders of magnitude relative to macroscopic device dimensions. 
Conventional explicit numerical methods become computationally intractable as they mandate resolution of all microscopic scales to ensure stability, rendering large-scale or extended-duration simulations prohibitively expensive.
Hence, multiscale numerical methods for particle dynamics simulation have become a focus of extensive research~\cite{BWZ2014U,JS2017T,Chartier2020,Hairer2020A,Katharina2020,Bacchini2020,Hairer2021L}.
A widely recognized solution framework is provided by asymptotic-preserving (AP) schemes~\cite{larsen1987asymptotic,golse1999convergence,jin1993fully,Filbet2016,zhu2017vlasov,crouseilles2013asymptotic,degond2010asymptotic,blaustein2024structure}, , which establish a unified computational methodology.
These methods are specifically designed to maintain the asymptotic transition properties between microscopic and macroscopic models at the discrete level.
Through their inherent adaptive capability, AP schemes automatically adjust their resolution between fine-scale kinetic descriptions and coarse-scale continuum representations, offering an elegant solution  in multiscale computational challenges.
For detailed discussions on the theoretical foundations and practical implementations of AP methods, we refer readers to the comprehensive review by Jin~\cite{Jin2022Asymptotic}.

In this paper, we discuss the structure-preserving algorithms for magnetized Vlasov--Poisson equations described in orthogonal curvilinear coordinates.
We have proven that the solution of the variational problem in orthogonal curvilinear coordinates  exists and unique, thus the finite element method is still suitable to be used.
In the new coordinate, though the system still has the structural properties, the Hamiltonian splitting technique is no longer applicable in order to derive the structure-preserving methods.
For the system in new formulation we apply the semi-implicit method which studied in~\cite{Filbet2016,Filbet2023Asymptotically}, and have proven that the fully discretization is asymptotically stable.
The main results of this paper can be summarized as follows.
\begin{itemize}
\item Generalization of Hamiltonian particle methods for magnetized Vlasov--Poisson systems in general curvilinear coordinate systems
\item A novel asymptotic-preserving temporal discretization algorithm is proposed and analyzed for two-dimensional problems
\end{itemize}

The outline of the paper is as follows.
In section 2, we introduce the magnetized Vlasov--Poisson equation.
Then in section 3 we present the equation in orthogonal curvilinear coordinates and construct the spatial discretization with which the semi-discrete system maintains the Hamiltonian structure inherited by the original system.
Furthermore, we discuss the temporal discretizations, and prove the asymptotically stable property of the resulting numerical methods in Section 4.
Section 5 demonstrates the numerical results.
Finally, we conclude this paper.

\section{The magnetized Vlasov--Poisson equation}

For the research of magnetic confinement fusion, it is natural and necessary to introduce the effect of an external magnetic field.
We consider a plasma consisting of a large number of charged particles while its distribution is described by the Vlasov equation.
The self-consistent electric field expressed by the potential satisfies the Poisson equation.
The magnetized Vlasov--Poisson equation can be expressed as
\begin{equation}\label{VP}
\left\{
\begin{aligned}
&\frac{\partial f}{\partial t}+v\cdot\frac{\partial f}{\partial x}+(\bm{E}+v\times \bm{B})\cdot\frac{\partial f}{\partial v}=0,\\
&\bm{E}=-\nabla_x\phi,\quad\nabla_x\cdot \bm{B}=0,\\
&-\Delta{\phi}=\rho(x,t)-\rho_0,\quad\rho=\int_{\Omega_{v}} fdv,
\end{aligned}
\right.
\end{equation}
where $f(t,x,v)$ is the distribution function of the particle, $x\in\Omega_{x}\subset\mathbb{R}^{3}$, $v\in\Omega_{v}\subset\mathbb{R}^{3}$ and $t\in\mathbb{R}_{+}$ denotes position, velocity and time in turn.
$\bm{E},\bm{B}\in\mathbb{R}^3$ are the self-consistent electric field and external magnetic field respectively.
The initial distribution satisfies $f(x,v,0)=f_0(x,v)$.
Furthermore, function $\rho$ denotes the density of the charge, and the ions are assumed to be homogeneous and their density is $\rho_0$.

For later use, we denote the $0$-th, $1$-st and $2$-nd moments of distribution function by
$$\rho(t,x)=\int_{\Omega_{v}} f(t,x,v)dv,\ J(t,x)=\int_{\Omega_{v}}v f(t,x,v)dv$$
and $$S(t,x)=\int_{\Omega_{v}}v\otimes v f(t,x,v)dv=\int_{\Omega_{v}}vv^\top f(t,x,v)dv.$$
In practical computation, the boundary conditions can be taken as zero, i.e., $f(t,x,v)=0$ and $\phi(x)=0$ on $\partial\Omega_x$ while the periodic boundary condition is also used frequently.
As an important model for studying the behavior of magnetically-confined plasma, the magnetized Vlasov--Poisson system  has many conservation properties.
By integrating the Vlasov equation in $v$ yield, one can get the following propositions.
\begin{proposition}
MVP equation \eqref{VP} has the following conservation laws:

Continuity equation:
\begin{equation}\label{ContinuityEq}
\partial_t \rho+\nabla_x\cdot J=0.
\end{equation}
	
Moment equation:
\begin{equation}\label{MomentEq}
\partial_t J+\nabla_x\cdot S-\rho\bm{E}-J\times\bm{B}=0.
\end{equation}
\end{proposition}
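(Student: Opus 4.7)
The plan is to derive both conservation laws by taking successive velocity moments of the Vlasov equation in \eqref{VP}. For the continuity identity I would integrate the Vlasov equation over $\Omega_v$ and argue term by term: the time derivative commutes with the $v$-integral to give $\partial_t\rho$; since $v$ is independent of $x$, the streaming term yields $\nabla_x\cdot\!\int v f\,dv=\nabla_x\cdot J$; and the Lorentz force term $(\bm{E}+v\times\bm{B})\cdot\partial_v f$ can be rewritten as $\nabla_v\!\cdot\![(\bm{E}+v\times\bm{B})f]$ because $\bm{E}$ is $v$-independent and $\nabla_v\!\cdot\!(v\times\bm{B})=0$ (the latter vanishes by antisymmetry of the Levi-Civita symbol, using that $\bm{B}$ does not depend on $v$). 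The divergence theorem turns that term into a velocity-boundary integral that vanishes under the stated boundary conditions on $f$ (homogeneous or sufficient decay at infinity).

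For the moment identity I would multiply the Vlasov equation by $v$ and integrate over $\Omega_v$. The first two terms produce $\partial_t J$ and $\nabla_x\!\cdot\! S$. The electric contribution is handled by integration by parts in velocity: the $i$-th component satisfies $\int v_i\, \bm{E}_j\,\partial_{v_j}f\,dv=-\int \delta_{ij}\bm{E}_j f\,dv=-\rho\bm{E}_i$, which supplies the $-\rho\bm{E}$ term. For the magnetic contribution the same manipulation gives $-\int \partial_{v_k}[v_i (v\times\bm{B})_k]\,f\,dv$; expanding and reusing $\nabla_v\!\cdot\!(v\times\bm{B})=0$, only the Kronecker piece $\delta_{ik}(v\times\bm{B})_k=(v\times\bm{B})_i$ survives, producing $-(J\times\bm{B})_i$. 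Collecting the four contributions and setting the sum to zero yields exactly \eqref{MomentEq}.

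The main technical point, though elementary, is the repeated use of the velocity-divergence-free property of the Lorentz force. Once $\nabla_v\!\cdot\!(v\times\bm{B})=0$ is established it drives both calculations, and the surface terms generated by integration by parts in $v$ are discarded under the assumed decay/boundary conditions on $f$. No regularity beyond what is required to differentiate moments in $t$ and $x$ under the integral sign enters the argument, so the proof amounts to a clean bookkeeping of these integrations by parts.
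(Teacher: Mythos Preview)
Your proposal is correct and follows exactly the approach the paper indicates: the paper's ``proof'' consists of the single remark preceding the proposition, ``By integrating the Vlasov equation in $v$ yield, one can get the following propositions,'' and you have simply supplied the standard details of that moment computation. The key observation you highlight, $\nabla_v\!\cdot\!(v\times\bm{B})=0$, together with integration by parts in $v$ under the decay assumptions on $f$, is precisely what drives both identities.
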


The conservative properties of the MVP equation \eqref{VP} can be described as the following proposition.
\begin{proposition}
If the potential function $\phi$ has a zero boundary or a periodic boundary, $f$ is periodic in $x$ and is compactly supported in $v$, the following quantities are all conserved.
	
(a) {\bf{Charge}}: $\mathcal{Q}=\int_{\Omega} fdxdv=\int_{\Omega_x} \rho dx$.
	
(b) {\bf{Energy}}: $\mathcal{H}=\frac{1}{2}\int_{\Omega} \vert v\vert^2 fdxdv+\frac{1}{2}\int_{\Omega_x} \vert \bm{E}\vert^2 dx$.
	
(c) {\bf{Entropy}}: $\mathcal{S}=\int_{\Omega} f\ln{f} dxdv$.
\end{proposition}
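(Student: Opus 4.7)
The plan is to exploit the conservation form of the Vlasov equation together with the divergence-free property of the phase-space velocity field $V(x,v,t)=(v,\bm{E}+v\times\bm{B})$. Since $\nabla_{x}\cdot v=0$ (as $x$ and $v$ are independent), $\nabla_{v}\cdot\bm{E}=0$ (because $\bm{E}$ does not depend on $v$), and $\nabla_{v}\cdot(v\times\bm{B})=0$ (by skew-symmetry of the cross product), the Vlasov equation in \eqref{VP} can be rewritten equivalently as the phase-space conservation law
\[
\partial_{t}f+\nabla_{x}\cdot(vf)+\nabla_{v}\cdot\bigl((\bm{E}+v\times\bm{B})f\bigr)=0.
\]
All three identities will follow by multiplying this equation by a suitable density and integrating over $\Omega_{x}\times\Omega_{v}$; the boundary integrals will vanish either by compact support in $v$ or by the periodic/zero boundary conditions in $x$.

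For (a) I would simply integrate the equation above over the full phase space; all transport contributions vanish under the stated boundary hypotheses, giving $d\mathcal{Q}/dt=0$. Equivalently, one may integrate the continuity equation \eqref{ContinuityEq} over $\Omega_{x}$. For (c) I would multiply by $1+\ln f$ and use the identities $(1+\ln f)\,\partial_{t}f=\partial_{t}(f\ln f)$ and $(1+\ln f)\nabla f=\nabla(f\ln f)$ to recast every term in divergence form. The divergence-freeness of the phase-space velocity then forces all non-temporal contributions to vanish after integration by parts, yielding $d\mathcal{S}/dt=0$. Conceptually, this is the familiar statement that any Casimir functional $\int G(f)\,dx\,dv$ is preserved by an incompressible flow on phase space.

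The main obstacle lies in (b), where kinetic and electrostatic energies must be shown to exchange exactly. My strategy is to multiply the Vlasov equation by $|v|^{2}/2$ and integrate; after integrating by parts in $v$, the identity $v\cdot(v\times\bm{B})=0$ eliminates the magnetic contribution, leaving
\[
\frac{d}{dt}\,\frac{1}{2}\int_{\Omega}|v|^{2}f\,dx\,dv=\int_{\Omega_{x}}\bm{E}\cdot J\,dx.
\]
To convert the right-hand side into a time derivative of $\tfrac{1}{2}\!\int|\bm{E}|^{2}\,dx$, I would substitute $\bm{E}=-\nabla_{x}\phi$, integrate by parts to obtain $\int_{\Omega_{x}}\phi\,\nabla_{x}\cdot J\,dx$, use the continuity equation \eqref{ContinuityEq} to replace $\nabla_{x}\cdot J$ by $-\partial_{t}\rho$, and finally invoke the Poisson equation to write $\partial_{t}\rho=-\partial_{t}\Delta\phi$ (since $\rho_{0}$ is time-independent). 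A final integration by parts then produces $-\tfrac{d}{dt}\tfrac{1}{2}\!\int_{\Omega_{x}}|\nabla_{x}\phi|^{2}\,dx$, closing the identity. The most delicate point throughout this chain is verifying that all boundary terms vanish; this is precisely where the zero or periodic boundary hypotheses on $\phi$ and $f$ in the proposition are indispensable.
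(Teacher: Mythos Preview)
Your argument is correct: the conservation-form rewriting of the Vlasov equation, together with the incompressibility of the phase-space vector field $(v,\bm{E}+v\times\bm{B})$, cleanly yields (a) and (c), and the chain of integrations by parts you outline for (b) is the standard and rigorous way to see the exact exchange between kinetic and electrostatic energy.

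However, the paper does not argue this way. It instead appeals to the Poisson-bracket formulation \eqref{MMWB}--\eqref{PoissonVM}: once the MVP system is written as $\dot{\mathcal F}=\{\{\mathcal F,\mathcal H\}\}$, conservation of $\mathcal H$ is immediate from the antisymmetry of the bracket, while $\mathcal Q$ and $\mathcal S$ are recognized as Casimirs (functionals whose bracket with every $\mathcal G$ vanishes), hence automatically conserved. Your approach is more elementary and fully self-contained---it does not presuppose the bracket structure and makes the role of the boundary hypotheses completely explicit. The paper's approach, by contrast, is shorter and more structural: it explains \emph{why} these particular quantities are conserved (Hamiltonian plus Casimirs) rather than verifying each one by hand, and it ties the proposition directly into the geometric framework that the rest of the paper is built on. Both routes are valid; yours is the classical PDE proof, theirs is the geometric-mechanics proof.
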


In our previous work~\cite{GAJ2022Hamiltonian}, we have presented \eqref{VP} possesses the following Poisson bracket
\begin{equation}
\begin{aligned}
\{\{\mathcal{F},\mathcal{G}\}\} (f)
&=\int_{\Omega} f\left\{ \frac{\delta\mathcal{F}}{\delta f},\frac{\delta\mathcal{G}}{\delta f}\right\}_{xv}dxdv\\
&+\int_{\Omega} f\bm{B}\cdot\left(\frac{\partial}{\partial v}\frac{\delta\mathcal{F}}{\delta f}\times\frac{\partial}{\partial v}\frac{\delta\mathcal{G}}{\delta f}\right)dxdv,
\end{aligned}
\label{MMWB}
\end{equation}
where $\mathcal{F}$ and $\mathcal{G}$ are two functionals of $f$, $\frac{\delta \mathcal{F}}{\delta f}$ is the variational derivative, $\Omega:=\Omega_{x}\times\Omega_{v}$. In \eqref{MMWB}, the operator $\left\{ \cdot,\cdot\right\} _{xv}$ is the canonical Poisson bracket which for two given functions $m(x,v)$ and  $n(x,v)$, that is
\[
\{m,n\}_{xv}=\frac{\partial m}{\partial x}\cdot\frac{\partial n}{\partial v}-\frac{\partial m}{\partial v}\cdot\frac{\partial n}{\partial x}.
\]
With the bracket \eqref{MMWB}, we consider the following Poisson system
\begin{equation}
\frac{d\mathcal{F}}{d t}=\{\{\mathcal{F},\mathcal{H}\}\},\label{PoissonVM}
\end{equation}
where $\mathcal{F}$ is any functional of $f$.
In fact, by setting
$$\mathcal{F}[f]=\int_{\Omega}f(\tilde{x},\tilde{v},t)\delta(x-\tilde{x})\delta(v-\tilde{v})d\tilde{x}d\tilde{v},$$
and defining the local energy
$$h(x,v)=\frac{\delta\mathcal{H}}{\delta f}(x,v)={\vert v\vert^{2}}/2+\phi({x}),$$
it follows from \eqref{PoissonVM} that
\begin{align*}
\frac{\partial f}{\partial t}&=-\{f,h\}_{xv}-\bm{B}\cdot\left(\frac{\partial f}{\partial v}\times \frac{\partial h}{\partial v}\right)
\end{align*}
which recovers the MVP equation \eqref{VP}.
This implies that the MVP equation can be written in a Poisson system.
Through Poisson bracket \eqref{MMWB} and expression \eqref{PoissonVM}, the above proposition can be easily verified~\cite{GAJ2022Hamiltonian}.

\section{Curvilinear coordinates}

Equations can be expressed in different forms when using corresponding coordinate systems.
However, the formulations of them in Cartesian coordinate are usually not the simplest.
For example, the spherical equation $x_1^2+x_2^2+x_3^2=R^2$ can be written as $r=R$ in spherical coordinate $(r,\varphi,\theta)$.
Furthermore, the results in~\cite{Filbet2023Asymptotically,Shi2024Drift} extend time-discretization methods to fully three-dimensional magnetic fields exhibiting toroidal symmetry. 
This geometric framework models realistic configurations employed in tokamak devices.
Therefore, studying equations in curvilinear coordinates has broad significance.
Vector fields in orthogonal curvilinear coordinates we used are given in the appendix~\ref{app:0}.

\subsection{Transformation}
We consider the bijective coordinate transformation from the space $\tilde{\Omega}_y$ to $\Omega_x$. The transformation is denoted by
$$F:\tilde{\Omega}_y\to\Omega_x\subset\mathbb{R}^3,\quad F(y)=x.$$
The Jacobi matrix is denoted by $D_F(y)$, and its elements are
$$\left(D_F(y)\right)_{ij}=\frac{\partial x_i}{\partial y_j},$$
the corresponding Jacobian is $\mathbb{J}(y)=\det(D_F(y))$.
Moreover, we assume that the new coordinates are orthogonal and $\mathbb{J}(y)>0$.
If $\mathbb{J}(y)<0$, we only need to apply the permutation of $y_i$ and $y_j$ once so that the new Jacobian $\mathbb{J}(y')>0$ about the transformation $y'$.
Also, $\mathbb{J}(y)\neq 0$ due to the orthogonality.
Then we can denote $D_F(y)^{-T}$ as $N(y)$. The curvilinear coordinates to the differential forms have been introduced as the following proposition.

\begin{proposition}\label{transform}
For a scalar differential 0-form $g\in H^{1}(\Omega_x)$, we can define $\tilde{g}\in H^{1}(\tilde{\Omega}_y)$ as
$$\tilde{g}(y)=g(x).$$

For a vector differential 1-form $\bm{E}\in H(curl,\Omega_x)$, it has $\tilde{\bm{E}}\in H(curl,\tilde{\Omega}_y)$ connected by
$$\bm{E}(x)=N(y)\tilde{\bm{E}}(y).$$

For a vector differential 2-form $\bm{B}\in H(div,\Omega_x)$, we have $\tilde{\bm{B}}\in H(div,\tilde{\Omega}_y)$ through
$$\bm{B}(x)=\frac{D_F(y)}{\mathbb{J}(y)}\tilde{\bm{B}}(y).$$

For a scalar differential 3-form $h\in L^{2}(\Omega_x)$, we can obtain $\tilde{h}\in L^{2}(\tilde{\Omega}_y)$ via
$$h(x)=\frac{1}{\mathbb{J}(y)}\tilde{h}(y).$$
\end{proposition}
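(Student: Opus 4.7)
The plan is to recognize these four formulas as the standard pullback maps of the de Rham complex on $\mathbb{R}^{3}$, where a 1-form corresponds to a covector field (covariant Piola transform), a 2-form corresponds to a divergence-conforming field (contravariant Piola transform), and a 3-form corresponds to a density. I would therefore prove each assertion in two stages: first, derive the transformation formula from the intrinsic definition of a differential $k$-form; second, verify that the resulting map is a homeomorphism on the corresponding Sobolev space.

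First I would treat the 0-form and 3-form cases, which are essentially bookkeeping. The 0-form statement $\tilde g(y)=g(F(y))$ is the definition of pullback, and the chain rule $\nabla_y\tilde g = D_F(y)^{T}\nabla_x g$ combined with the change of variables $dx=\mathbb{J}(y)\,dy$ and the assumed smoothness and non-degeneracy of $F$ gives the $H^{1}(\tilde\Omega_y)\leftrightarrow H^{1}(\Omega_x)$ equivalence. For the 3-form, writing $h\,dx_1\wedge dx_2\wedge dx_3$ and pulling back yields the factor $\mathbb{J}(y)$, whence $\tilde h(y)=\mathbb{J}(y)h(F(y))$ rearranges to the stated formula; the $L^{2}$ equivalence then reduces to the change of variables with the Jacobian bounded above and below.

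Next I would derive the 1-form formula by writing $\bm{E}\cdot dx=E_i\,dx_i$ and applying the coordinate chain rule $dx_i = (\partial x_i/\partial y_j)\,dy_j$. This gives components $\tilde E_j = E_i\,\partial x_i/\partial y_j$, i.e.\ $\tilde{\bm E}(y) = D_F(y)^{T}\bm{E}(F(y))$, which inverts to $\bm{E}(x) = N(y)\tilde{\bm E}(y)$ with $N=D_F^{-T}$. To show $H(\operatorname{curl})$-equivalence, I would argue through the weak formulation: for any test field $\bm\phi\in C^{\infty}_c$, rewrite $\int_{\Omega_x}(\nabla_x\times\bm{E})\cdot\bm\phi\,dx$ by integration by parts and change of variables, using the identity $\nabla_x\times(N\tilde{\bm E}) = \tfrac{1}{\mathbb{J}}D_F\,(\nabla_y\times\tilde{\bm E})$ that follows from commutativity of pullback with the exterior derivative. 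This identity is the true content of the 1-form part, and it simultaneously transfers to the 2-form formula: once established, the curl of a pullback is a pullback under the Piola map, so $\bm{B}=\mathbb{J}^{-1}D_F\,\tilde{\bm B}$ satisfies $\nabla_x\cdot \bm B = \mathbb{J}^{-1}\nabla_y\cdot\tilde{\bm B}$, giving the $H(\operatorname{div})$-equivalence by the same change-of-variables argument.

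The main obstacle I anticipate is the bookkeeping for the identity $\nabla_x\times(N\tilde{\bm E}) = \mathbb{J}^{-1}D_F(\nabla_y\times\tilde{\bm E})$ and its divergence counterpart, since they require expanding $N$, $D_F$, and $\mathbb{J}$ together with second derivatives of $F$ and collecting terms that ultimately cancel by equality of mixed partials. Although these are classical facts of vector calculus (equivalent to $d\circ F^{*}=F^{*}\circ d$), a coordinate-wise verification is tedious; I would streamline it by invoking the differential form viewpoint once and then simply reading off the formulas. The orthogonality of the coordinates and $\mathbb{J}>0$ assumed in the paper make $N$, $D_F$, and $\mathbb{J}$ uniformly bounded on $\tilde\Omega_y$, and this is the quantitative ingredient that upgrades the pointwise identities to norm-equivalences between the Sobolev spaces, completing the proof.
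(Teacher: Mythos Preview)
Your proposal is correct and in fact goes further than the paper, which does not prove this proposition at all: it simply cites \cite{Eric2021Geometric} and remarks that the formulas can also be read off from Appendix~\ref{app:0}. That appendix records the gradient, divergence, and Laplacian in orthogonal curvilinear coordinates via the Lam\'e coefficients $H_i$, which is a concrete computational instance of the pullback identities you outline. Your differential-form argument (pullback $F^{*}$ commutes with the exterior derivative $d$, then translate $k$-forms to their vector proxies) is the cleaner and more general route---it works for any diffeomorphism, not just orthogonal coordinates---and is essentially the viewpoint of the cited reference. One small caution: in your final paragraph you assert that orthogonality together with $\mathbb{J}>0$ forces $N$, $D_F$, and $\mathbb{J}^{\pm 1}$ to be uniformly bounded on $\tilde\Omega_y$; this does not follow without an additional hypothesis such as compactness of $\tilde\Omega_y$ or bi-Lipschitz regularity of $F$ (polar coordinates near $r=0$ already show the issue). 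The paper is silent on this point here and only imposes compactness later, so for the Sobolev-space equivalences you should state such an assumption explicitly.
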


The proof could be found in~\cite{Eric2021Geometric} which also can be deduced from Appendix~\ref{app:0}.
Then we can have $\nabla_{x}\cdot \bm{B}=\frac{1}{\mathbb{J}(y)}\nabla_{y}\cdot \tilde{\bm{B}}$ and $\nabla_x=N(y)\nabla_y$.
According to Proposition~\ref{transform}, we can transform the magnetized Vlasov--Poisson system to
\begin{equation}\label{CVP}
\left\{
\begin{aligned}
&\frac{\partial \tilde{f}}{\partial t}+N(y)^{T}v\cdot\frac{\partial \tilde{f}}{\partial y}+N(y)(\tilde{\bm{E}}+(N(y)^{T}v)\times \tilde{\bm{B}})\cdot\frac{\partial \tilde{f}}{\partial v}=0,\\
&\nabla_{y}\cdot \tilde{\bm{B}}=0,\quad -N(y)\nabla_y\cdot N(y)\nabla_y \tilde{\phi}=\int_{\mathbb{R}^3} \tilde{f}dv-\rho_0.
\end{aligned}
\right.
\end{equation}
Specifically, in this model we consider long time behavior of the plasma and a strong magnetic field.
Both of them are scaled by parameter $\varepsilon$.

\noindent{\bf Remark.} It is worth mentioning that $\tilde{f}$ is no longer conserved, but $\mathbb{J}\tilde{f}$ is.
In other words,
\begin{equation*}
\frac{\partial \mathbb{J}\tilde{f}}{\partial t}+\nabla_{y}\cdot(N^\top v\mathbb{J}\tilde{f})+\nabla_{v}\cdot(N(\tilde{\bm{E}}+(N^\top v)\times \tilde{\bm{B}})\mathbb{J}\tilde{f})=0.
\end{equation*}
Some corresponding conservative methods can been found in~\cite{Chacon2016A,Hamiaz2016The}.

We use Particle-in-cell method to obtain a discrete distribution function which reads
\begin{equation*}
f_h(x,v,t)=\sum\limits_{s=1}^{N_p}\alpha_s\delta(x-X_s(t))\delta(v-V_s(t)).
\end{equation*}
Here, $N_p$ is the number of particles while $\alpha_s$ is the weight of particle.

After the transformation, the distribution function becomes to
\begin{equation*}
\tilde{f}_h(y,v,t)=\sum\limits_{s=1}^{N_p}\alpha_s\frac{\delta(y-Y_s(t))}{\mathbb{J}(y)}\delta(v-V_s(t))
\end{equation*}
due to the Dirac function $\delta$.
To generate particles more conveniently, the following equivalent form can be used
\begin{equation*}
\mathbb{J}(y)\tilde{f}_h(y,v,t)=\sum\limits_{s=1}^{N_p}\alpha_s \delta(y-Y_s(t))\delta(v-V_s(t)).
\end{equation*}

Thus Vlasov equation in (\ref{CVP}) transforms into the following particle equation
\begin{equation}\label{CCP1}
\left\{
\begin{aligned}
&\dot{Y_s}=N(Y_s)^{T}V_s,\\
&\dot{V_s}=N(Y_s)\tilde{\bm{E}}(Y_s)+N(Y_s)\hat{\tilde{\bm{B}}}(Y_s)N(Y_s)^{T}V_{s}, \quad
 s=1,2,\cdots ,N_p.
\end{aligned}
\right.
\end{equation}

\subsection{Spatial discretization and Hamiltonian structure}

In~\cite{GAJ2022Hamiltonian}, we have discretized the space with finite element method to ensure that the discrete potential $\phi_h\in H_0^{1}(\Omega_x)$.
Thus the semi-discrete system can preserves the Poisson structure.
In the orthogonal curvilinear coordinate, the discrete potential is denoted by $\tilde{\phi}_h\in H_0^{1}(\tilde{\Omega}_y)$.

In order to employ finite element discretization in space, we need to determine the well-defined variational problem.
In the following theorem, we have formed the variational formulation in orthogonal curvilinear coordinate and have proven that the corresponding bilinear form  obey the conditions of Lax-Milgram theorem.
That is, it is  bounded and coercive.

\begin{theorem}
The bilinear form $a'(u,v)=-\int_{\tilde{\Omega}} (N(y)\nabla_y\cdot N(y)\nabla_y u)v \mathbb{J}(y)dy$ is bounded and coercive.
\end{theorem}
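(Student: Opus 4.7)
The plan is to reduce the bilinear form to the standard Dirichlet form via integration by parts, and then obtain boundedness and coercivity from uniform bounds on the geometric quantities $N(y)$ and $\mathbb{J}(y)$.

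First I would rewrite $a'(u,v)$ in symmetric ``Dirichlet'' form. Interpreting $N(y)\nabla_y \cdot (N(y)\nabla_y u)$ componentwise as $N_{ij}(y)\,\partial_{y_j}\bigl(N_{ik}(y)\,\partial_{y_k} u\bigr)$ and integrating by parts against the test function $v \mathbb{J}$ (assumed to lie in $H_0^1(\tilde{\Omega})$, so the boundary contribution vanishes), I get
\begin{equation*}
a'(u,v) = \int_{\tilde{\Omega}} N_{ik}(y)\,\partial_{y_k} u\,\partial_{y_j}\!\bigl(N_{ij}(y)\,\mathbb{J}(y)\,v\bigr)\,dy.
\end{equation*}
The crucial step is the Piola identity $\partial_{y_j}\!\bigl(\mathbb{J}(y)\,N_{ij}(y)\bigr)=0$, which holds because $N_{ij} = \partial y_j/\partial x_i$ is the inverse Jacobian and $F$ is a bijective $C^2$ change of variables. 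Thus the derivative falls onto $v$ alone and the expression collapses to the symmetric form
\begin{equation*}
a'(u,v) = \int_{\tilde{\Omega}} \bigl(N(y)\nabla_y u\bigr)\cdot\bigl(N(y)\nabla_y v\bigr)\,\mathbb{J}(y)\,dy,
\end{equation*}
which is just $\int_{\Omega_x} \nabla_x \hat u\cdot \nabla_x \hat v\,dx$ under the change of variables $x=F(y)$.

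Second, boundedness follows by Cauchy--Schwarz once I use that $F$ is a smooth bijection on a bounded domain, so $|N(y)|$ and $\mathbb{J}(y)$ are bounded above uniformly; this gives $|a'(u,v)| \le C\|u\|_{H^1(\tilde{\Omega})}\|v\|_{H^1(\tilde{\Omega})}$.

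Third, for coercivity I use that the columns of $D_F$ are orthogonal and nonzero (orthogonal curvilinear coordinates with $\mathbb{J}>0$), so $N^{\top}N = (D_F^{\top}D_F)^{-1}$ is a diagonal positive definite matrix with uniformly positive lower bound $\alpha>0$ on $\overline{\tilde{\Omega}}$, and $\mathbb{J}\ge \mathbb{J}_{\min}>0$. Hence
\begin{equation*}
a'(u,u) = \int_{\tilde{\Omega}} |N(y)\nabla_y u|^2\,\mathbb{J}(y)\,dy \ge \alpha\, \mathbb{J}_{\min}\int_{\tilde{\Omega}} |\nabla_y u|^2\,dy.
\end{equation*}
The Poincar\'e inequality on $H_0^1(\tilde{\Omega})$ then upgrades this to $a'(u,u)\ge c\|u\|_{H^1(\tilde{\Omega})}^2$.

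The main obstacle I anticipate is the integration-by-parts step, because the operator is not written in divergence form: the outer $N$ sits outside the divergence. Making this rigorous requires the Piola identity, and care with the regularity assumed on $F$ so that the identity holds pointwise. Once that is handled, the remaining estimates are routine and rely only on the uniform bounds on $N(y)$ and $\mathbb{J}(y)$ guaranteed by the orthogonal curvilinear assumption $\mathbb{J}(y)>0$ together with smoothness of $F$ on the bounded reference domain.
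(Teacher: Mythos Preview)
Your argument is correct. The Piola identity $\partial_{y_j}(\mathbb{J}N_{ij})=0$ is exactly what is needed to pass from the non-divergence form of the operator to the symmetric Dirichlet form $\int_{\tilde\Omega}(N\nabla_y u)\cdot(N\nabla_y v)\,\mathbb{J}\,dy$, and the subsequent bounds via uniform control of $N^{\top}N$ and $\mathbb{J}$ on the compact reference domain are routine.

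The paper's proof is shorter and takes a slightly different route: it exploits the orthogonality assumption from the outset to identify $N\nabla_y\cdot N\nabla_y u$ with the curvilinear Laplacian written in Lam\'e-coefficient form, namely $\frac{1}{H_1H_2H_3}\sum_i \partial_{y_i}\bigl(\tfrac{H_jH_k}{H_i}\partial_{y_i}u\bigr)$. Multiplying by $-v\mathbb{J}=-vH_1H_2H_3$ cancels the prefactor, leaving a scalar divergence-form operator with strictly positive coefficients $\tfrac{H_jH_k}{H_i}$; integration by parts is then a one-liner and no Piola identity is invoked. Your approach is more general---the integration by parts step does not need orthogonality, only $F\in C^2$---and makes the connection to the Cartesian Dirichlet form $\int_{\Omega_x}\nabla_x\hat u\cdot\nabla_x\hat v\,dx$ explicit, at the cost of importing the Piola identity as an auxiliary lemma. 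Both arrive at the same symmetric form (since $\mathbb{J}\,N^{\top}N=\mathrm{diag}(H_jH_k/H_i)$ in the orthogonal case), and both tacitly rely on uniform two-sided bounds on the $H_i$, which neither proof states but which follow from smoothness of $F$ on a compact domain with $\mathbb{J}>0$.
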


\begin{proof}
By noticing that
$$N(y)\nabla_y\cdot N(y)\nabla_y u=\frac{1}{H_1 H_2 H_3}\sum_{ijk}\frac{\partial}{\partial y_i}(\frac{H_j H_k}{H_i} \frac{\partial u}{\partial y_i})$$
where $H_1,H_2,H_3$ are Lame coefficients, it's obvious since $H_i>0$ and $H_1 H_2 H_3=\mathbb{J}(y)$.
\end{proof}

We assume a finite dimensional space $\tilde{V}_{h}$ is the subspace of $H_0^{1}(\tilde{\Omega}_y)$ and $\{W_j(y)\}_{j=1}^{N}$ are piecewise polynomial basis functions of it.
The discrete potential function $\tilde{\phi}_h\in \tilde{V}_{h}$ can be expressed as
\begin{equation}\label{CCphi}
\tilde{\phi}_h(y,t)=\sum_{j=1}^{N}\tilde{\phi}_{j}(t){W}_{j}(y).
\end{equation}

Substituting (\ref{CCphi}) into the particle equation (\ref{CCP1}), then we have
\begin{equation}\label{CCP2}
\left\{
\begin{aligned}
&\dot{Y_s}=N(Y_s)^{T}V_s,\\
&\dot{V_s}=-N(Y_s)\sum_{j=1}^{N}\tilde{\phi}_{j}\nabla{W}_{j}(Y_s)+N(Y_s)\hat{\tilde{\bm{B}}}(Y_s)N(Y_s)^{T}V_{s}.
\end{aligned}
\right.
\end{equation}

We can represent the equation with the following discrete Poisson bracket (\ref{CCP2}).
For any functions $F$ and $G$ on $(Y,V)$, define the bracket
\begin{equation}\label{CCbra}
\begin{aligned}
\left\{ F,G\right\} (Y,V)
&=\sum_{s=1}^{N_p}\frac{1}{\alpha_{s}}\left(N(Y_s)\frac{\partial F}{\partial Y_{s}}\cdot\frac{\partial G}{\partial V_{s}}-N(Y_s)\frac{\partial G}{\partial Y_{s}}\cdot\frac{\partial F}{\partial V_{s}}\right)\\
&+\sum_{s=1}^{N_p} \frac{1}{\alpha_{s}}\frac{D_F(Y_s)}{\mathbb{J}(Y_s)}\tilde{\bm{B}}(Y_s)\cdot\left(\frac{\partial F}{\partial V_s} \times\frac{\partial G}{\partial V_s}\right).
\end{aligned}
\end{equation}
and discrete Hamiltonian function
\begin{equation}\label{CCHam}
H(Y,V)=\frac{1}{2} \sum_{s=1}^{N_p} \alpha_s \vert V_s\vert^{2}+\frac{1}{2}\sum_{j=1}^N\sum_{k=1}^N \phi_\mathbb{J}(y)\phi_k(Y) a'(W_j,W_k).
\end{equation}
By means of discrete bracket (\ref{CCbra}) and discrete Hamiltonian  (\ref{CCHam}), the semi-discrete system (\ref{CCP2}) can be written as
\begin{equation*}
\dot{Y}_s=\{{Y}_s,H\}, \quad \dot{V}_s=\{{V}_s,H\}.
\end{equation*}

In the following we express the semi-discrete system (\ref{CCP2}) in matrix form.
First we introduce the following notations:
\begin{equation*}
\begin{gathered}
\tilde{\Phi}(Y)=(\tilde{\phi}_1,\tilde{\phi}_2,\ldots,\tilde{\phi}_{N})^{T}(Y) \in \mathbb{R}^{N},\quad \mathbb{M}\in \mathbb{R}^{N\times N},\quad \mathbb{M}_{jk}=a'(W_j,W_k),\\
\mathbb{G}(Y)=\left(
          \begin{array}{cccc}
            \nabla W_1(Y_1) & \nabla W_2(Y_1) & \cdots & \nabla W_N(Y_1) \\
            \nabla W_1(Y_2) & \nabla W_2(Y_2) & \cdots & \nabla W_N(Y_2) \\
            \cdots & \cdots & \cdots &\cdots \\
            \nabla W_1(Y_{N_p}) & \nabla W_2(Y_{N_p}) & \cdots & \nabla W_N(Y_{N_p}) \\
          \end{array}
        \right)\in \mathbb{R}^{(3N_P)\times N},\\
\tilde{\mathbb{B}}(Y)={\rm{diag}}(\hat{\tilde{\bm{B}}}(Y_1),\hat{\tilde{\bm{B}}}(Y_2),\cdots,\hat{\tilde{\bm{B}}}(Y_{N_p}))\in \mathbb{R}^{(3N_p)\times(3N_p)},\\
\mathbb{N}={\rm{diag}}(N(Y_1),N(Y_2),\cdots,N(Y_{N_p}))\in \mathbb{R}^{(3N_p)\times(3N_p)}.
\end{gathered}
\end{equation*}

Define the diagonal weight matrix $\Omega=\mathrm{diag}(\alpha_1,\alpha_2,\ldots,\alpha_{N_p})\in \mathbb{R}^{N_p\times N_p}$ and $3$-dimensional identity matrix $I$.
Further we note that $$\mathbb{W}=\Omega\otimes I\in \mathbb{R}^{(3N_p)\times (3N_p)}.$$
Thus, the discrete Poisson bracket (\ref{CCbra}) can be rewritten in the following matrix form
\begin{equation}\label{dispb}
\left\{ F,G\right\}(Z)=\frac{\partial F}{\partial Z}^\top \mathbb{K}(Y) \frac{\partial G}{\partial Z},
\end{equation}
with $Z=(Y^\top,V^\top)^\top$, and
\begin{equation}\label{CCbramat}
\mathbb{K}(Y)=\left(
                \begin{array}{cc}
                  0 & \mathbb{W}^{-1}\mathbb{N}(Y)^\top \\
                  -\mathbb{W}^{-1}\mathbb{N}(Y) & \frac{1}{\varepsilon}\mathbb{W}^{-1}\mathbb{N}(Y)\tilde{\mathbb{B}}(Y)\mathbb{N}(Y)^\top \\
                \end{array}
              \right)
\end{equation}
is the Poisson matrix corresponding to the bracket (\ref{CCbra}).
After using the above notations, the discrete Hamiltonian (\ref{CCHam}) can be written as
\begin{equation}
\begin{aligned}
H(Y,V)=\frac{1}{2} V^\top\mathbb{W}V+\frac{1}{2}\tilde{\Phi}(Y)^{T} \mathbb{M} \tilde{\Phi}(Y).
\end{aligned}
\end{equation}
The corresponding system (\ref{CCP2}) can be rewritten in terms of the Poisson matrix $\mathbb{K}(X)$ as
\begin{equation}\label{disCVPODE}
\left\{
\begin{aligned}
\dot{Y}&=\mathbb{N}(Y)^\top{V},\\
\dot{V}&=\mathbb{N}(Y)\mathbb{G}(Y)\tilde{\Phi}(Y)+\mathbb{N}(Y)\tilde{\mathbb{B}}(Y)\mathbb{N}(Y)^\top V.
\end{aligned}
\right.
\end{equation}

\begin{theorem}
Discrete bracket~(\ref{dispb}) with respect to system (\ref{disCVPODE}) is a Poisson bracket.
\end{theorem}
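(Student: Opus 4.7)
The plan is to establish the two defining properties of a Poisson bracket---skew-symmetry and the Jacobi identity---for the bracket \eqref{dispb} with matrix $\mathbb{K}(Y)$ from \eqref{CCbramat}. Skew-symmetry follows immediately from the block structure: the off-diagonal blocks $\mathbb{W}^{-1}\mathbb{N}(Y)^\top$ and $-\mathbb{W}^{-1}\mathbb{N}(Y)$ are negative transposes of one another because $\mathbb{W}$ is diagonal, while the bottom-right block $\varepsilon^{-1}\mathbb{W}^{-1}\mathbb{N}(Y)\tilde{\mathbb{B}}(Y)\mathbb{N}(Y)^\top$ is skew-symmetric since $\tilde{\mathbb{B}}(Y)$ is block-diagonal with each block $\hat{\tilde{\bm B}}(Y_s)$ a hat matrix and the conjugation $\mathbb{N}(\cdot)\mathbb{N}^\top$ preserves skew-symmetry.

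The Jacobi identity is the substantive part. Rather than attack the Schouten identity
\begin{equation*}
\sum_l \Big(\mathbb{K}_{il}\partial_{z_l}\mathbb{K}_{jk} + \mathbb{K}_{jl}\partial_{z_l}\mathbb{K}_{ki} + \mathbb{K}_{kl}\partial_{z_l}\mathbb{K}_{ij}\Big) = 0
\end{equation*}
directly, the preferred route is to recognize \eqref{CCbra} as the pull-back of the Cartesian particle-in-cell Poisson bracket of~\cite{GAJ2022Hamiltonian} under the partial diffeomorphism $\Psi(Y_s,V_s)_{s=1}^{N_p} = (F(Y_s),V_s)_{s=1}^{N_p}$. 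The chain rule gives $\partial/\partial X_s = N(Y_s)\,\partial/\partial Y_s$ (since $N = D_F^{-T}$), and Proposition~\ref{transform} provides $\bm B(x) = D_F(y)\tilde{\bm B}(y)/\mathbb{J}(y)$. Substituting these identities into the Cartesian bracket reproduces \eqref{CCbra} term by term. Because the pull-back of a Poisson structure by a diffeomorphism is again Poisson, the Jacobi identity for \eqref{CCbra} is inherited automatically. Note also that $\mathbb{K}(Y)$ is independent of $V$, so only the $\partial_{Y_s}$ derivatives of the Schouten expression could produce obstructions, and the pull-back argument handles these uniformly.

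The main obstacle is verifying that the magnetic term pulls back correctly. One must check the identity
\begin{equation*}
V^\top N(Y_s)\hat{\tilde{\bm B}}(Y_s) N(Y_s)^\top U = \bm B(F(Y_s))\cdot (U\times V),
\end{equation*}
which follows from the standard formula $(Au)\times (Aw) = \det(A)\,A^{-\top}(u\times w)$ applied with $A = N(Y_s)^\top$, together with $\det(N) = 1/\mathbb{J}$ and $N^{-\top} = D_F$. Once this bookkeeping is in place, the canonical and magnetic parts of \eqref{CCbra} are identified term-by-term with the corresponding parts of the Cartesian bracket evaluated at $X_s = F(Y_s)$, and no further computation is required.
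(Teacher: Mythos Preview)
Your argument is correct and takes a genuinely different route from the paper. The paper proceeds by directly verifying the Schouten condition $\sum_l(\partial_{Z^l}\mathbb{K}_{ij}\,\mathbb{K}_{lk}+\text{cyc.})=0$ for the matrix \eqref{CCbramat}: since $\mathbb{K}$ depends only on $Y$, the sum reduces to $l\le 3N_p$, and a case split on the positions of $i,j,k$ shows the canonical blocks cancel by equality of mixed partials $\partial^2 Y^i/\partial X^j\partial X^k$, while the magnetic block is handled via the identity $\hat{\bm B}=N\hat{\tilde{\bm B}}N^\top$ and an appeal to the Cartesian computation in~\cite{GAJ2022Hamiltonian} under the divergence-free hypothesis on $\bm B$.

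Your route---recognizing \eqref{CCbra} as the pull-back of the Cartesian particle bracket by the phase-space diffeomorphism $\Psi(Y_s,V_s)=(F(Y_s),V_s)$ and invoking the functoriality of Poisson structures---is shorter and more conceptual: the Jacobi identity, the skew-symmetry, and the role of $\nabla_x\cdot\bm B=0$ are all inherited at once from the Cartesian bracket without any index computation. The paper's approach, by contrast, is self-contained at the level of the Schouten identity and does not require the reader to know that Poisson brackets are invariant under diffeomorphisms. One small remark: the identity you single out as the ``main obstacle'' is actually what the paper uses to pass between \eqref{CCbra} and the matrix form \eqref{CCbramat} \emph{before} the theorem is stated, so in your framework it is not part of the Jacobi argument at all---the magnetic term in \eqref{CCbra} pulls back trivially because $V_s$ is unchanged and only $\bm B(X_s)=\mathbb{J}^{-1}D_F\tilde{\bm B}(Y_s)$ needs substituting.
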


\begin{proof}

According to proposition \ref{brapro}, we need to verify the matrix (\ref{CCbramat}) satisfies
\begin{equation*}
\sum_{l=1}^{6{N_p}}\left(\frac{\partial\mathbb{K}_{ij}}{\partial Z^l}\mathbb{K}_{lk}+\frac{\partial\mathbb{K}_{jk}}{\partial Z^l}\mathbb{K}_{li}+\frac{\partial\mathbb{K}_{ki}}{\partial Z^l}\mathbb{K}_{lj}\right)=0,
\end{equation*}
for all indexes $i,j,k\in\{1,\cdots,6N_p\}$ and $Z=(Y,V)$.

Since the matrix (\ref{CCbramat}) depends on $Y$, we only need to consider the case of $l\in\{1,\cdots,3N_p\}$.
That is,
\begin{equation}\label{braN1}
\sum_{l=1}^{3{N_p}}\left(\frac{\partial\mathbb{K}_{ij}}{\partial Y^l}\mathbb{K}_{lk}+\frac{\partial\mathbb{K}_{jk}}{\partial Y^l}\mathbb{K}_{li}+\frac{\partial\mathbb{K}_{ki}}{\partial Y^l}\mathbb{K}_{lj}\right).
\end{equation}
Without loss of generality, we suppose that $\frac{\partial\mathbb{K}_{ij}}{\partial Y^l}\mathbb{K}_{lk}\neq 0$.
When $k\in\{1,\cdots,3N_p\}$, it has $\mathbb{K}_{lk}=0$.
Thus, it only needs to focus on the case of $k\in\{3N_p+1,\cdots,6N_p\}$.

{\bf{Case 1.}} When $i,j\in\{1,\cdots,3N_p\}$, it has $\mathbb{K}_{ij}=0$.

{\bf{Case 2.}} When $i\in\{1,\cdots,3N_p\},j\in\{3N_p+1,\cdots,6N_p\}$, it has $\mathbb{K}_{li}=0$.
Then (\ref{braN1}) becomes to
\begin{equation}\label{braN2}
\sum_{l=1}^{3{N_p}}\left(\frac{\partial\mathbb{K}_{ij}}{\partial Y^l}\mathbb{K}_{lk}+\frac{\partial\mathbb{K}_{ki}}{\partial Y^l}\mathbb{K}_{lj}\right).
\end{equation}

Since matrix $\mathbb{N}$ and $\mathbb{W}^{-1}$ are block diagonal, we only need to
consider the case in which $i,j,k,l$ are the subscripts for the same particle.
According to $(D_F)_{ij}=\frac{\partial x_i}{\partial y_j}$ and the definition, it has
$N_{ij}=\frac{\partial y_j}{\partial x_i}$.
Since weighting matrix is constant and composed of the reciprocals of particles' weights.
Thus, we can leave out them when verifying the identity, and term (\ref{braN2}) turns into
\begin{align*}
&\frac{\partial}{\partial Y^l}\frac{\partial Y_{i}}{\partial X^j}\frac{\partial Y_{l}}{\partial X^k}-\frac{\partial}{\partial Y^l}\frac{\partial Y_{i}}{\partial X^k}\frac{\partial Y_{l}}{\partial X^j}\\
=&\frac{\partial^2 Y^i}{\partial X^k\partial X^j}-\frac{\partial^2 Y^i}{\partial X^j\partial X^k}\\
=&0.
\end{align*}
If $j\in\{1,\cdots,3N_p\},i\in\{3N_p+1,\cdots,6N_p\}$, it is similar to this case and will not be repeated.

{\bf{Case 3.}}  When $i,j\in\{3N_p+1,\cdots,6N_p\}$, it has $N^\top_{ij}=\frac{\partial y_i}{\partial x_j}$ and
\begin{equation*}
\hat{\bm{B}}=N\hat{\tilde{\bm{B}}}N^\top,
\end{equation*}
Thus the following proof is similar to the case in~\cite{GAJ2022Hamiltonian}.

Overall, if the magnetic field is divergence-free, Jacobi identity of bracket~(\ref{CCbra}) is satisfied.

\end{proof}

\section{Asymptotic behavior of the MVP equation}

The asymptotic behavior of the MVP equation under strong magnetic fields varies case by case, the  tailored analysis to specific magnetic field configurations is necessary.
In the rest of this paper, we focus on simulating the asymptotic behavior of the two-dimensional MVP equation with a fixed direction magnetic field $B(x)=[0,0,b(x_1,x_2)]^\top/\varepsilon$.
And in order to study such equation, one should take a small parameter related to the magnetic field and the time scale which leads to the following system
\begin{equation}\label{2dCVP}
\left\{
\begin{aligned}
&\varepsilon\frac{\partial \tilde{f}}{\partial t}+N(y)^{T}v\cdot\frac{\partial \tilde{f}}{\partial y}+\left(N(y)\tilde{\bm{E}}+\frac{\tilde{b}(y)K}{\varepsilon}v\right)\cdot\frac{\partial \tilde{f}}{\partial v}=0,\\
&-N(y)\nabla_y\cdot N(y)\nabla_y \tilde{\phi}=\int_{\Omega_v} \tilde{f}dv-\rho_0.
\end{aligned}
\right.
\end{equation}
where $K=\left(
\begin{array}{cc}
	0 & 1 \\
	-1 & 0
\end{array}
\right).$
In this case, $y\in\tilde{\Omega}_y\subset\mathbb{R}^{2},v\in\Omega_{v}\subset\mathbb{R}^{2}$ and it has a strong magnetic field, which is $\bm{B}(x)=\frac{1}{\varepsilon}[0,0,b(x)]^\top$ and $\tilde{b}(y):=b(F(y))$.

\subsection{Asymptotic behavior of \eqref{2dCVP} in maximal ordering case}
First of all, we discuss the asymptotic behavior of the characteristic line equation of the MVP equation \eqref{2dCVP}. 
In the maximal ordering case, the  characteristic line equation of the MVP equation reads
\begin{equation}\label{CPD}
\left\{
\begin{aligned}
&\varepsilon\dot{y}=N^\top(y)v,\\
&\varepsilon\dot{v}=N(y)\tilde{\bm{E}}(y)+\frac{\tilde{b}(y)K}{\varepsilon}v,\\
&y(0)=y_0,\ v(0)=v_0,
\end{aligned}
\right.
\end{equation}
where $\tilde{b}(y)=\bar{b}(\varepsilon y)$ and $\tilde{b}(0)=b_0\neq 0$.
Then we can get the following theorem\footnote{The results are corresponding to conclusions of Cartesian coordinate in~\cite{Meiss1990Canonical,Filbet2016,Degond2016,Filbet2020Asymptotics}.}.
\begin{theorem}\label{GCODE}
Assume that $\tilde{b}\in C^1(\tilde{\Omega}_y)$ and $\tilde{\Omega}_y$ is compact.
Then, in the limit $\varepsilon\to 0$ of \eqref{CPD}, it follows that $y\to u$, where $u$ corresponds to the guiding center approximation
\begin{equation}\label{GCeq1}
\dot{u}=\frac{K\tilde{\bm{E}}(u)}{b_0\mathbb{J}(y)}=:\mathcal{R}(u),\quad u(0)=y(0).
\end{equation}
\end{theorem}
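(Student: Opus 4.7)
The plan is to pass to the limit $\varepsilon\to 0$ in the characteristic system \eqref{CPD} by algebraically inverting the velocity equation and then controlling a singular residual. Starting from $\varepsilon\dot v = N(y)\tilde{\bm{E}}(y) + (\tilde b(y)/\varepsilon)Kv$, I solve for $Kv$ and multiply through by $K^{\top}=-K$ (using $K^{\top}K=I$) to obtain
\begin{equation*}
v \;=\; \frac{\varepsilon}{\tilde b(y)}\,K N(y)\tilde{\bm{E}}(y) \;-\; \frac{\varepsilon^{2}}{\tilde b(y)}\,K\dot v.
\end{equation*}
Substituting into $\varepsilon\dot y = N^{\top}(y)v$ and dividing by $\varepsilon$ gives
\begin{equation*}
\dot y \;=\; \frac{1}{\tilde b(y)}\,N^{\top}(y)KN(y)\,\tilde{\bm{E}}(y) \;-\; \frac{\varepsilon}{\tilde b(y)}\,N^{\top}(y)K\dot v.
\end{equation*}
The crucial algebraic identity is $N^{\top}KN = K/\mathbb{J}$, which follows at once from the two-dimensional rule $A^{\top}KA = (\det A)\,K$ applied to $A=N=D_F^{-\top}$ together with $\det N = 1/\mathbb{J}$. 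Combined with $\tilde b(y)=\bar b(\varepsilon y)\to b_{0}$ uniformly on the compact set $\tilde\Omega_y$, this identifies the right-hand side of \eqref{GCeq1} as the candidate limit.

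The hard part is controlling the residual $\mathcal{R}_{\varepsilon}:=-(\varepsilon/\tilde b)\,N^{\top}K\dot v$, which is \emph{not} pointwise small since $\dot v$ is of order $\varepsilon^{-2}$. I plan to proceed in two steps. First, conservation of the energy $H(y,v)=\tfrac{1}{2}|v|^{2}+\tilde\phi(y)$ inherited from the Poisson formulation of Section~3 yields a uniform-in-$\varepsilon$ bound on $|v|$, once $\tilde\phi$ is bounded below on the compact set $\tilde\Omega_y$. Second, I introduce the corrected guiding-centre variable
\begin{equation*}
z \;:=\; y \;+\; \varepsilon\,\frac{N^{\top}(y)Kv}{\tilde b(y)},
\end{equation*}
chosen so that the singular term $\varepsilon N^{\top}K\dot v/\tilde b$ cancels exactly when $\dot z$ is expanded via the chain rule. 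This leaves an equation of the form
\begin{equation*}
\dot z \;=\; \mathcal{R}(z) \;+\; \varepsilon\,\Psi(y,v),
\end{equation*}
with $\Psi$ smooth and quadratic in $v$. Showing that $\int_{0}^{t}\varepsilon\,\Psi(y(s),v(s))\,ds\to 0$ is then achieved either by averaging against the fast gyration (whose period is $O(\varepsilon^{2})$) or by an integration-by-parts in time that exploits the oscillatory factor $Kv$, in the spirit of the Cartesian analyses of~\cite{Filbet2016,Filbet2020Asymptotics}.

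The proof concludes via a Gronwall comparison of $z$ with the solution $u$ of \eqref{GCeq1} starting from the same initial datum: local Lipschitz continuity of $\mathcal{R}$ on $\tilde\Omega_y$, inherited from the $C^{1}$ hypothesis on $\tilde b$ together with the smoothness of $\tilde{\bm{E}}$, $\mathbb{J}$ and $N$, delivers $\sup_{[0,T]}|z(t)-u(t)|\to 0$, while $|y-z|=O(\varepsilon)$ by construction, so that $y\to u$ uniformly on compact time intervals. The main technical obstacle I anticipate is the averaging step: unlike in the Cartesian setting, the geometric coefficients $N(y)$, $\mathbb{J}(y)$, $\tilde b(y)$ vary along trajectories and couple nontrivially with the fast rotation encoded by $K$, so careful bookkeeping will be required to verify that the quadratic-in-$v$ corrections indeed average to zero in the limit.
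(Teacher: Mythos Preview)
Your formal derivation---solving the velocity equation for $v$, substituting into $\dot y$, and invoking the two-dimensional identity $N^{\top}KN=K/\mathbb{J}$---is precisely what the paper does. The paper's proof in fact stops at that point: it asserts uniform convergence without further argument, implicitly deferring (via the footnote) to the Cartesian analyses in the cited references. So at the level the paper operates, your proposal already matches it.

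Where you go further and attempt a genuine convergence proof, there is a bookkeeping error. Differentiating $z=y+\varepsilon N^{\top}(y)Kv/\tilde b(y)$ yields, besides the desired cancellation, the term
\[
\varepsilon\,\Bigl(\tfrac{d}{dt}\tfrac{N^{\top}(y)}{\tilde b(y)}\Bigr)Kv
=\Bigl(\partial_y\tfrac{N^{\top}}{\tilde b}\Bigr)\!\cdot\!(N^{\top}v)\,Kv,
\]
since $\dot y=N^{\top}v/\varepsilon$. In the Cartesian case $N^{\top}\equiv I$ and only $\tilde b(y)=\bar b(\varepsilon y)$ is differentiated, producing an extra factor of $\varepsilon$; but in curvilinear coordinates $N^{\top}(y)$ has $O(1)$ derivatives, so the remainder is $\Psi(y,v)$, not $\varepsilon\Psi(y,v)$. (Indeed, had it truly been $\varepsilon\Psi$ with $\Psi$ bounded, $\int_0^t\varepsilon\Psi\,ds=O(\varepsilon t)$ would be trivial and no averaging would be required, contradicting your own final paragraph.) The resulting $O(1)$ quadratic-in-$v$ term does not average to zero over a gyration in general, so the curvilinear averaging you anticipate is not merely bookkeeping but a real obstacle.

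A much more economical route, and essentially what the paper's one-line proof is leaning on, is to transport the Cartesian result through the diffeomorphism: $x=F(y)$ satisfies the Cartesian characteristic system, for which the cited references give $x\to u_{\mathrm{Cart}}$ with $\dot u_{\mathrm{Cart}}=K\bm{E}(u_{\mathrm{Cart}})/b_0$; since $F$ is a $C^1$ diffeomorphism on the compact domain, $y=F^{-1}(x)\to F^{-1}(u_{\mathrm{Cart}})=:u$, and applying $D_F^{-1}=N^{\top}$ together with $N^{\top}KN=K/\mathbb{J}$ recovers \eqref{GCeq1}. This bypasses the curvilinear averaging entirely.
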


\begin{proof}
By multiplying the second equation of \eqref{CPD} by $\frac{K}{\tilde{b}(y)}$, one can get
\begin{equation*}
\varepsilon\frac{K\dot{v}}{\tilde{b}(y)}=\frac{KN(y)\tilde{\bm{E}}(y)}{\tilde{b}(y)}-\frac{v}{\varepsilon}
\end{equation*}
Then the solution $y$ of \eqref{CPD} converges uniformly as $\varepsilon\to 0$ to the deterministic trajectory of \eqref{GCeq1} due to
$$N(y)^\top KN(y)=\frac{K}{\mathbb{J}(y)}.$$
\end{proof}

\subsection{Time discretization}

Then, we discuss the time discretization for the semi-discrete systems (\ref{disCVPODE}).
Due to the complexity of matrix $N(y)$, the semi-discrete system can not be split as several subsystems which can be solved explicitly.
Thus, according to the physical problem we develop the other numerical methods and apply them to simulate the dynamical behavior of charged particles in plasma physics~\cite{Hairer2017Symmetric,Hairer2018Energy,Hairer2020Long}.
In practical magnetic confinement devices, the external magnetic field is very strong (i.e. $0<\varepsilon\ll 1$), which brings a new time scale (Larmor gyration) to the original particle models.
When using the classical numerical methods to simulate these problems, the dynamic behavior of charged particles can only be accurately characterized with step size less than the cyclotron period.
The numerical simulation will suffer form expansive computation over long-time.
Recently, a series of works~\cite{Chartier2020,Hairer2020A,Hairer2022Large} have been devoted to constructing numerical schemes in breaking the time step limit in Cartesian coordinate.
On the other hand, asymptotic-preserving (AP) schemes~\cite{JS1999E,Jin2022Asymptotic} provide a generic framework for such multiscale problems.
Based on this idea, in this section we will construct the suitable numerical methods for systems in Curvilinear coordinate.

To solve system (\ref{CPD}), we apply the implicit-explicit Runge-Kutta schemes developed in~\cite{Pareschi2005Implicit}.
In this paper, we focus on the following two types.

{\bf APSI1:} By denoting $\tau=\frac{\Delta t}{\varepsilon}$ and $\lambda=\frac{\Delta t}{\varepsilon^2}$, a first order semi-implicit scheme is given by
\begin{equation}\label{ASI}
\left\{
\begin{aligned}
&y^{n+1}=y^n+\tau N(y^n)^{T} v^{n+1},\\
&v^{n+1}=v^n+\tau N(y^n)\tilde{\bm{E}}(y^n)+\lambda b(F(y^n))K v^{n+1}.
\end{aligned}
\right.
\end{equation}
(\ref{ASI}) equals to
\begin{equation*}
\left(
\begin{array}{c}
y^{n+1}\\
v^{n+1}\\
\end{array}
\right)=
\left(
\begin{array}{cc}
I & -\tau N(y^n)^{T}\\
0 & I-\lambda b(F(y^n))K\\
\end{array}
\right)^{-1}
\left(
\begin{array}{c}
y^n\\
v^n+\tau N(y^n)\tilde{\bm{E}}(y^n)\\
\end{array}
\right).
\end{equation*}
For the upper triangular matrix above, its inverse matrix is easy to calculate according to covariant basis and skew-symmetric matrix $K$.
Specifically, it is
\begin{equation*}
\left(
\begin{array}{cc}
I & \tau N(y^n)^{T}+\frac{\tau\lambda b(F(y^n))}{1+(\lambda b(F(y^n)))^2}N(y^n)^{T}K+\frac{\tau\lambda^2 b(F(y^n))^2}{1+(\lambda b(F(y^n)))^2}N(y^n)^{T}K^2\\
0 & I+\frac{\lambda b(F(y^n))}{1+(\lambda b(F(y^n)))^2}K+\frac{\lambda^2 b(F(y^n))^2}{1+(\lambda b(F(y^n)))^2}K^2
\end{array}
\right).
\end{equation*}

{\bf APSI2:} Moreover, a second order L-stable scheme can be written as
\begin{equation}\label{ASI2}
\left\{
\begin{aligned}
&y_n^{(1)}=y^n+\gamma\tau N(y^n)^{T} v_n^{(1)},\\
&v_n^{(1)}=v^n+\gamma\tau F_n^{(1)},\\
&y^{n+1}=y^n+(1-\gamma)\tau N(y^n)^{T} v_n^{(1)}+\gamma\tau N(y^n)^{T} v^{n+1},\\
&v^{n+1}=v^n+(1-\gamma)\tau F_n^{(1)}+\gamma\tau F_n^{(2)}.
\end{aligned}
\right.
\end{equation}
where $\gamma=1-1/\sqrt{2}$ and
\begin{align*}
&y_n^{(2)}=y^n+\frac{\tau}{2\gamma}N(y^n)^{T}v_n^{(1)},\\
&F_n^{(1)}=N(y^n)\tilde{\bm{E}}(y^n)+\frac{b(F(y^n))}{\varepsilon}Kv_n^{(1)},\\
&F_n^{(2)}=N(y_n^{(2)})\tilde{\bm{E}}(y_n^{(2)})+\frac{b(F(y_n^{(2)}))}{\varepsilon}K v^{n+1}.
\end{align*}
For a vector form, it reads
\begin{equation*}
\begin{aligned}
&\left(
\begin{array}{c}
y_n^{(1)}\\
v_n^{(1)}\\
\end{array}
\right)=
\left(
\begin{array}{cc}
I & -\gamma\tau N(y^n)^{T}\\
0 & I-\gamma\lambda b(F(y^n))K\\
\end{array}
\right)^{-1}
\left(
\begin{array}{c}
y^n\\
v^n+\gamma\tau N(y^n)\tilde{\bm{E}}(y^n)\\
\end{array}
\right),\\
&\left(
\begin{array}{c}
y^{n+1}\\
v^{n+1}\\
\end{array}
\right)=
\left(
\begin{array}{cc}
I & -\gamma\tau N(y_n^{(2)})^{T}\\
0 & I-\gamma\lambda b(F(y_n^{(2)}))K\\
\end{array}
\right)^{-1}
\left(
\begin{array}{c}
y^n+(1-\gamma)\tau N(y^n)^{T}v_n^{(1)}\\
v^n+(1-\gamma)\tau F_n^{(1)}+\gamma\tau N(y_n^{(2)})\tilde{\bm{E}}(y_n^{(2)})\\
\end{array}
\right).
\end{aligned}
\end{equation*}

\subsubsection{Uniform estimates of \eqref{ASI}}

Then we focus on the asymptotic-preserving property of these algorithms.
For convenience, we will use $b^n$ to denote $b(F(y^n))$.
As a result, we have the following theorem for scheme (\ref{ASI}).
\begin{theorem}\label{APTH}
Assume that $\bm{E}\in W^{1,\infty}$, $\tilde{\Omega}_y$ and $\Omega_x$ are compact.
Then for the solution $(y^n,v^n)$ to equation (\ref{ASI}) in a finite time $T$, there exists $\lambda_0$ such that when $\lambda\ge\lambda_0$ the following estimate holds
\begin{equation}\label{Estimate1}
\Vert y^n-u^n \Vert\lesssim\varepsilon^2\left(1+\Vert\varepsilon^{-1}v^0-(b^{0})^{-1}KN(y^0)\tilde{\bm{E}}(y^{0})\Vert\right)
\end{equation}
where $u^n$ is the numerical solution of the following guiding-center model
\begin{equation}\label{CUdis}
u^{n+1}=u^{n}+\Delta t \mathcal{R}(u^{n}),\quad u^0=y^0.
\end{equation}
\end{theorem}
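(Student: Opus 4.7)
The strategy is to split the velocity into a slow drift and a rapidly contracting residual. I would introduce the ``frozen drift''
\[
\hat v^n := (b^n)^{-1}KN(y^n)\tilde{\bm E}(y^n), \qquad w^n := v^n-\varepsilon\hat v^n,
\]
and set $M:=\|\varepsilon^{-1}v^0-\hat v^0\|$, which is exactly the quantity on the right of \eqref{Estimate1} and satisfies $\|w^0\|=\varepsilon M$. The goal is to show that $w^n$ stays of size $O(\varepsilon\Delta t)$ after a transient, while the contribution of $\varepsilon\hat v^n$ to the position update reproduces $\Delta t\,\mathcal R(y^n)$ modulo $O(\varepsilon^2(1+M))$.

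First I would invert the velocity equation in \eqref{ASI} explicitly. Since $K^2=-I$, the matrix
\[
A^n:=(I-\lambda b^n K)^{-1}=\frac{1}{1+(\lambda b^n)^2}\bigl(I+\lambda b^n K\bigr)
\]
has spectral norm bounded by $1/\lambda_0$, and a direct computation (using $K^{-1}=-K$ and $\tau=\varepsilon\lambda$) shows that $\varepsilon\hat v^n$ is the unique frozen-$y^n$ fixed point of the affine map $v\mapsto A^n\bigl(v+\tau N(y^n)\tilde{\bm E}(y^n)\bigr)$. Subtracting this fixed point from $v^{n+1}$ produces the clean recursion
\[
w^{n+1}=A^n w^n+\varepsilon(\hat v^n-\hat v^{n+1}).
\]
Using $\bm E\in W^{1,\infty}$ and compactness of $\tilde\Omega_y$ to make $\hat v$ Lipschitz, together with $y^{n+1}-y^n=O(\Delta t)$, the forcing is $O(\varepsilon\Delta t)$, and iterating yields $\|w^n\|\le C\varepsilon\Delta t+\lambda_0^{-n}\varepsilon M$.

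Next I would expand $y^{n+1}-y^n=\tau N(y^n)^\top v^{n+1}$ in inverse powers of $\lambda b^n$. Keeping the two leading terms of $A^n$ gives
\[
y^{n+1}-y^n=\frac{\Delta t\,K\tilde{\bm E}(y^n)}{b^n\mathbb J(y^n)}+\frac{\varepsilon^2 N(y^n)^\top N(y^n)\tilde{\bm E}(y^n)}{(b^n)^2}+\frac{\varepsilon}{b^n}N(y^n)^\top Kv^n+r^n,
\]
with a higher-order remainder $r^n=O(\varepsilon^4/\Delta t)$. The decisive point is an algebraic cancellation: substituting $v^n=\varepsilon\hat v^n+w^n$ and using the orthogonal-coordinate identity $N^\top KN=K/\mathbb J$ together with $K^2=-I$ annihilates both explicit $\varepsilon^2$-terms, leaving $\Delta t\,\mathcal R(y^n)+(\varepsilon/b^n)N(y^n)^\top K w^n+r^n$. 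Subtracting the guiding-centre step $u^{n+1}-u^n=\Delta t\,\mathcal R(u^n)$ and invoking Lipschitz continuity of $\mathcal R$ (inherited from the assumptions), discrete Gronwall gives $\|y^n-u^n\|\lesssim\sum_{k<n}\varepsilon\|w^k\|+\sum_{k<n}\|r^k\|$. Geometric summation of the residual bound produces $\sum_k\varepsilon\|w^k\|\lesssim \varepsilon^2\bigl(T+M/(1-1/\lambda_0)\bigr)\lesssim\varepsilon^2(1+M)$, while $\sum_k\|r^k\|\lesssim T\varepsilon^4/\Delta t^2\lesssim \varepsilon^2/\lambda_0$ under $\lambda\ge\lambda_0$, delivering \eqref{Estimate1}.

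The main obstacle will be the $O(\varepsilon^2)$ cancellation above: it relies on $\hat v^n$ being simultaneously the asymptotic drift of Theorem \ref{GCODE} \emph{and} the exact discrete frozen fixed point of \eqref{ASI}, together with the orthogonal-coordinate identity $N^\top KN=K/\mathbb J$ (a consequence of $H_1H_2H_3=\mathbb J$) that converts the would-be spurious Cartesian term into the form matching $\mathcal R$. A secondary bookkeeping difficulty is keeping the remainder $r^n$ sharp at order $\varepsilon^4/\Delta t$ rather than a crude $\varepsilon^4$, so that the summed consistency error still matches the target $\varepsilon^2$ in the sharpest admissible regime $\Delta t\sim\lambda_0\varepsilon^2$.
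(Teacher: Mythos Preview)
Your strategy is correct and essentially the same as the paper's: introduce a drift residual, show it contracts geometrically under $A^n=(I-\lambda b^nK)^{-1}$, and compare the position update to the explicit Euler step for $\mathcal R$ via Lipschitz/Gronwall. The fixed-point observation $v^{n+1}-\varepsilon\hat v^n=A^n(v^n-\varepsilon\hat v^n)$ and the identity $N^\top KN=K/\mathbb J$ are exactly the two ingredients the paper uses.

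The one substantive difference is the indexing of the residual, and it is worth noting because it removes what you call the ``main obstacle''. The paper sets
\[
z^n:=\varepsilon^{-1}v^n-\hat v^{\,n-1},
\]
i.e.\ it subtracts the drift frozen at the \emph{previous} position. With this shift the position update becomes \emph{exactly}
\[
y^{n}=y^{n-1}+\Delta t\,N(y^{n-1})^\top z^{n}+\Delta t\,\mathcal R(y^{n-1}),
\]
with no expansion of $A^n$, no remainder $r^n$, and no $\varepsilon^2$-cancellation to verify. In your notation this is immediate from your own fixed-point identity: writing $v^{n+1}=\varepsilon\hat v^n+A^n w^n$ and inserting into $y^{n+1}-y^n=\tau N(y^n)^\top v^{n+1}$ gives directly
\[
y^{n+1}-y^n=\Delta t\,\mathcal R(y^n)+\tau N(y^n)^\top A^n w^n,\qquad \|\tau N^\top A^n w^n\|\lesssim \varepsilon\|w^n\|,
\]
since $\tau\|A^n\|\lesssim\tau/\lambda=\varepsilon$. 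This is precisely the paper's splitting (their $z^{n+1}=\varepsilon^{-1}A^n w^n$). So the $1/\lambda$-expansion, the $\varepsilon^2$-cancellation, and the tracking of $r^n=O(\varepsilon^4/\Delta t)$ in your plan are an avoidable detour: your fixed-point step already delivers the exact decomposition, you just did not reuse it in the position update. The resulting bound and the final Gronwall argument are then identical to yours.
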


\begin{proof}
Since $\bm{E}\in W^{1,\infty}$, $\tilde{\Omega}_y$ and $\Omega_x$ are compact, electric field, inverse Jacobi matrix, Jacobian with its reciprocal and first-order derivative are all bounded.
So a common upper bound $\kappa$ is taken for estimates.
Firstly we introducing intermediate variables
\begin{equation}\label{znmid}
z^n=\varepsilon^{-1}v^n-(b^{n-1})^{-1}KN(y^{n-1})\tilde{\bm{E}}(y^{n-1}).
\end{equation}
On the other hand, we have
$$v^1=\left(I-\lambda b^0 K\right)^{-1}(v^0+\tau N(y^0)\tilde{\bm{E}}(y^0)).$$
Since $\left(I-\beta K\right)^{-1}=\frac{I+\beta K}{1+\beta^2}$ holds for $\forall \beta\in\mathbb{R}$, it yields that
$$z^1=\frac{I+\lambda b^0 K}{1+(\lambda b^0)^2}\left(\varepsilon^{-1}v^0-(b^{0})^{-1}KN(y^0)\tilde{\bm{E}}(y^{0})\right).$$
And the following equality holds according to (\ref{znmid})
\begin{equation}\label{znplus}
z^{n+1}=\frac{I+\lambda b^n K}{1+(\lambda b^n)^2}\left(z^n-K\left((b^{n})^{-1}N(y^n)\tilde{\bm{E}}(y^{n})-(b^{n-1})^{-1}N(y^{n-1})\tilde{\bm{E}}(y^{n-1})\right)\right)
\end{equation}
for $\forall n\ge 1$.
Due to (\ref{ASI}), it has
\begin{equation}\label{Cdis2}
y^n=y^{n-1}+\tau N(y^{n-1})^\top v^n=y^{n-1}+\Delta t N(y^{n-1})^\top z^n+\Delta t \mathcal{R}(y^{n-1}).
\end{equation}
Let $\Lambda=\left\Vert\frac{1+\lambda b(F(y))}{1+(\lambda b(F(y)))^2}\right\Vert_{\infty}, a:=\Lambda\max\left\{1+2\kappa^2\Delta t,\ 1+2\kappa^3\Delta t\right\}$, $b:=2\Lambda \kappa^4\Delta t$ and $c:=\Lambda\Vert\varepsilon^{-1}v^0-(b^{0})^{-1}KN(y^0)\tilde{\bm{E}}(y^{0})\Vert.$
By fixing the time step $\Delta t$, when $\varepsilon\to 0$, it has $\lambda\to\infty$.
Thus there exists constants $\lambda_0,\ \alpha>0$ such that $a\leq \alpha<1$ when $\lambda\ge\lambda_0$.
After taking the norm on (\ref{znplus})'s both sides, it yields $\Vert z^{n+1}\Vert\leq a^n c+\frac{b}{1-a}.$
By denoting $e^n:=y^n-u^n$, it can be obtained that
\begin{equation*}
\Vert e^n \Vert\leq \frac{a}{\Lambda}\Vert e^{n-1} \Vert+\kappa\Delta t a^{n-1}c+\kappa\Delta t\frac{b}{1-a}.
\end{equation*}
Finally, (\ref{Estimate1}) holds due to $e^0=0$.

\end{proof}

On the other hand, one can get a better estimate if the initial value in (\ref{CUdis}) is modified.
\begin{corollary}
If the conditions in theorem (\ref{APTH}) hold, it has the following estimate:
$$\Vert y^n-u^n \Vert\lesssim\varepsilon^2\left(1+\left(\frac{1}{\lambda}+\Delta t\right)\Vert\varepsilon^{-1}v^0-(b^{0})^{-1}KN(y^0)\tilde{\bm{E}}(y^{0})\Vert\right),$$
where $u^n$ is the numerical solution of equation (\ref{CUdis}) with modified initial value
\begin{equation*}
u^0=y^0+\frac{\varepsilon}{b^{0}} N(y^0)^\top\left(Kv^0+\frac{\varepsilon}{b^{0}} N(y^0)\tilde{\bm{E}}(y^{0})\right).
\end{equation*}
\end{corollary}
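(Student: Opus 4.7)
The plan is to mirror the proof of Theorem~\ref{APTH}, adding one crucial algebraic cancellation that the modified initial value $u^0$ is designed to produce. First I would keep the auxiliary variable $z^n = \varepsilon^{-1} v^n - (b^{n-1})^{-1} K N(y^{n-1}) \tilde{\bm{E}}(y^{n-1})$ for $n\ge 1$ and introduce the shorthand $z_0^\ast := \varepsilon^{-1} v^0 - (b^0)^{-1} K N(y^0) \tilde{\bm{E}}(y^0)$, whose norm is exactly the quantity appearing on the right-hand side of the claim. Using \eqref{znmid}--\eqref{Cdis2} and the guiding-center recursion \eqref{CUdis}, I would telescope the error $e^n := y^n - u^n$ into $e^n = e^0 + \Delta t \sum_{k=1}^n N(y^{k-1})^\top z^k + \Delta t \sum_{k=1}^n [\mathcal{R}(y^{k-1}) - \mathcal{R}(u^{k-1})]$, and the geometric-decay estimate $\|z^k\| \le a^{k-1}c + b/(1-a)$ with $a<1$, already proved inside Theorem~\ref{APTH}, remains available without modification.

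Next I would compute $e^0$ in a useful form. Substituting $v^0 = \varepsilon z_0^\ast + \varepsilon(b^0)^{-1} K N(y^0)\tilde{\bm{E}}(y^0)$ into the modified initial value in the statement and using $K^2=-I$, the two terms containing $\tilde{\bm{E}}(y^0)$ cancel, yielding the clean identity $e^0 = y^0 - u^0 = -(\varepsilon^2/b^0)\, N(y^0)^\top K z_0^\ast$. The crucial step is to combine $e^0$ with the first telescope contribution $\Delta t N(y^0)^\top z^1$, using $z^1 = \frac{I + \lambda b^0 K}{1+(\lambda b^0)^2} z_0^\ast$ from the proof of Theorem~\ref{APTH}. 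The algebraic identity $\Delta t\lambda = \varepsilon^2\lambda^2$ makes the $K z_0^\ast$ coefficients collapse and one obtains the exact formula $e^0 + \Delta t N(y^0)^\top z^1 = \frac{1}{1+(\lambda b^0)^2}\, N(y^0)^\top \bigl(\Delta t\, I - (\varepsilon^2/b^0) K\bigr) z_0^\ast$, whose norm is of order $(1+(\lambda b^0)^2)^{-1}(\Delta t+\varepsilon^2)\|z_0^\ast\| \lesssim (\varepsilon^2/\lambda)\|z_0^\ast\|$.

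For the tail $\Delta t \sum_{k\ge 2} N(y^{k-1})^\top z^k$, the geometric bound from Theorem~\ref{APTH} gives $\sum_{k\ge 2} a^{k-1} c \le ac/(1-a) \lesssim \|z_0^\ast\|/\lambda^2$ and $\sum_{k\ge 2} b/(1-a) \lesssim T/\lambda$, so the $\Delta t$-weighted sum is bounded by $(\varepsilon^2/\lambda)\|z_0^\ast\| + T\varepsilon^2$. Applying discrete Gronwall to the Lipschitz term $\mathcal{R}(y^{k-1}) - \mathcal{R}(u^{k-1})$, which uses $\bm{E}\in W^{1,\infty}$ and the compactness of $\tilde{\Omega}_y$ and $\Omega_x$, combines the contributions into $\|e^n\| \lesssim \varepsilon^2 + (\varepsilon^2/\lambda)\|z_0^\ast\|$ for $n\Delta t\le T$, which is majorized by $\varepsilon^2\bigl(1 + (1/\lambda + \Delta t)\|z_0^\ast\|\bigr)$ as claimed.

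The main obstacle is the algebraic cancellation in the first-step calculation: one must recognize that the specific form of $u^0-y^0$ given in the statement, once expanded in the $(z_0^\ast, \tilde{\bm{E}})$ decomposition of $v^0$ via $K^2 = -I$, reduces to precisely $(\varepsilon^2/b^0) N(y^0)^\top K z_0^\ast$, and that this expression matches the leading $K z_0^\ast$-coefficient of $\Delta t N(y^0)^\top z^1$ because of $\Delta t\lambda = \varepsilon^2\lambda^2$. Once this identification is made, every remaining estimate is just a rearrangement of bounds already established in the proof of Theorem~\ref{APTH}.
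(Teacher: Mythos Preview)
Your approach is essentially the paper's: both proofs express $e^0$ as $-(\varepsilon^2/b^0)N(y^0)^\top K z_0^\ast$ and cancel it against the $K$-part of $\Delta t\,N(y^0)^\top z^1$, leaving a residual of size $\varepsilon^2/\lambda\cdot\|z_0^\ast\|$; the paper packages this as a formula for $e^1$ via the splitting $\frac{I+\lambda b^0 K}{1+(\lambda b^0)^2}=\frac{K}{\lambda b^0}+\frac{\lambda b^0 I-K}{\lambda b^0(1+(\lambda b^0)^2)}$, which is algebraically the same as your identity for $e^0+\Delta t\,N(y^0)^\top z^1$.

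There is one small slip in your Gronwall step. You assert that Gronwall yields $\|e^n\|\lesssim \varepsilon^2+(\varepsilon^2/\lambda)\|z_0^\ast\|$, but the Lipschitz sum $\Delta t\sum_{k=1}^n[\mathcal{R}(y^{k-1})-\mathcal{R}(u^{k-1})]$ contains the $k=1$ term $\Delta t[\mathcal{R}(y^0)-\mathcal{R}(u^0)]$, whose size is $\Delta t\,L\|e^0\|\sim \Delta t\,\varepsilon^2\|z_0^\ast\|$. This contribution survives the Gronwall iteration and is precisely the origin of the $\Delta t$ in the corollary's factor $(1/\lambda+\Delta t)$. Once you carry this term through, the bound you obtain is $\|e^n\|\lesssim \varepsilon^2\bigl(1+(1/\lambda+\Delta t)\|z_0^\ast\|\bigr)$ directly, rather than a stronger estimate that is then ``majorized'' by the claim. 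With this correction the argument is complete and matches the paper's proof.
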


\begin{proof}
By noting that
$$z^1=\left(\frac{K}{\lambda b^0}+\frac{\lambda b^0I-K}{\lambda b^0(1+(\lambda b^0)^2)}     \right)\left(\varepsilon^{-1}v^0-(b^{0})^{-1}KN(y^0)\tilde{\bm{E}}(y^{0})\right),$$
and then it has
\begin{equation*}
e^1=\Delta t \mathcal{R}(y^{0})-\Delta t \mathcal{R}(u^{0})+\Delta t N(y^0)^\top\frac{\lambda b^0I-K}{\lambda b^0(1+(\lambda b^0)^2)}\left(\varepsilon^{-1}v^0-(b^{0})^{-1}KN(y^0)\tilde{\bm{E}}(y^{0})\right).
\end{equation*}
Moreover, it follows that
\begin{equation*}
\begin{aligned}
\Vert e^n \Vert&\leq \left(\frac{a}{\Lambda}\right)^{n-1}\Vert e^1 \Vert+C(\kappa)\frac{\Delta t}{\lambda} a^{n-1}c\left(\left(\frac{1}{\Lambda}\right)^{n-1}-1\right)+C(T,\kappa)\frac{b}{1-a},\\
&\leq C(T,\kappa,\alpha,b_0)\varepsilon^{2}(\Delta t+\frac{1}{\lambda})\Vert\varepsilon^{-1}v^0-(b^{0})^{-1}KN(y^0)\tilde{\bm{E}}(y^{0})\Vert+C(T,\kappa)\varepsilon^{2}.
\end{aligned}
\end{equation*}

\end{proof}

\begin{corollary}\label{orederTH}
Under the assumptions of Theorem~\ref{APTH}, the proposed scheme \eqref{ASI} is of order $1$ when $\varepsilon\ll\Delta{t}$. 
More precisely, it has
\begin{equation}\label{Estimateoreder}
\Vert y^N-y(T)\Vert\lesssim \Delta{t},
\end{equation}
for $T=N\Delta{t}$.
\end{corollary}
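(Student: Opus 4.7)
The plan is to split the error $\|y^N - y(T)\|$ by introducing an intermediate continuous limit $u(t)$ solving the guiding-center ODE $\dot{u} = \mathcal{R}(u)$, $u(0) = y^0$, and using the triangle inequality
\[
\|y^N - y(T)\| \leq \underbrace{\|y^N - u^N\|}_{\text{(I)}} + \underbrace{\|u^N - u(T)\|}_{\text{(II)}} + \underbrace{\|u(T) - y(T)\|}_{\text{(III)}},
\]
where $u^N$ is the explicit Euler approximation defined by \eqref{CUdis}. Each of these three terms can be controlled by a result already available in the paper, and the hypothesis $\varepsilon \ll \Delta t$ will then absorb all $\varepsilon$-terms into $O(\Delta t)$.

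First, term (I) is handled directly by Theorem~\ref{APTH}, which yields $\|y^N - u^N\| \lesssim \varepsilon^2 (1 + \|z^0\|)$ with $z^0 = \varepsilon^{-1} v^0 - (b^0)^{-1} K N(y^0)\tilde{\bm{E}}(y^0)$. Next, term (II) is a purely classical error estimate for the forward Euler scheme applied to the smooth ODE $\dot u = \mathcal{R}(u)$. Since $\tilde{\bm{E}}$, $N$, and $1/\mathbb{J}$ are $C^1$ on the compact set $\tilde{\Omega}_y$, the vector field $\mathcal{R}$ is Lipschitz, so a standard Gronwall argument gives $\|u^N - u(T)\| \lesssim \Delta t$ on $[0,T]$. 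Finally, term (III) is the continuous asymptotic error from Theorem~\ref{GCODE}. Although that theorem is stated only as uniform convergence, its proof (via $v = \varepsilon K N(y)\tilde{\bm{E}}(y)/\tilde{b}(y) + O(\varepsilon^2)$ after multiplication by $K/\tilde{b}(y)$) yields the quantitative bound $\|u(t) - y(t)\| \lesssim \varepsilon$ uniformly on $[0,T]$, which is standard in the guiding-center literature.

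Combining the three bounds,
\[
\|y^N - y(T)\| \lesssim \varepsilon^2 (1 + \|z^0\|) + \Delta t + \varepsilon.
\]
Under the regime $\varepsilon \ll \Delta t$ (so in particular $\varepsilon \lesssim \Delta t$ and $\varepsilon^2 \lesssim \varepsilon$), the first and third terms are dominated by $\Delta t$, provided $\|z^0\|$ remains bounded (or at worst $O(\varepsilon^{-1})$, which still gives $\varepsilon^2 \|z^0\| = O(\varepsilon)$). This delivers the claimed first-order estimate $\|y^N - y(T)\| \lesssim \Delta t$.

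The main obstacle will be step (III): Theorem~\ref{GCODE} as stated gives only qualitative convergence, so one must either extract an explicit $O(\varepsilon)$ rate from its proof by iterating the identity $v = \varepsilon KN(y)\tilde{\bm{E}}(y)/\tilde{b}(y) - \varepsilon^2 K\dot v/\tilde{b}(y)$ and plugging it back into $\varepsilon \dot y = N(y)^\top v$, or invoke a known quantitative two-scale / averaging result. A secondary subtlety is that both $u^n$ and $u(t)$ must be initialized at $y^0$ (as in \eqref{CUdis}), so the $\varepsilon$-dependence of the initial layer enters only through $\|z^0\|$ in term (I); if $v^0$ is of order one rather than well-prepared, one must verify that $\varepsilon^2\|z^0\| = O(\varepsilon)$ is still dominated by $\Delta t$ in the regime $\varepsilon \ll \Delta t$, which is indeed the case.
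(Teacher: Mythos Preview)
Your proposal is correct and follows essentially the same approach as the paper: the paper also splits via the same triangle inequality into $\|y^N-u^N\|+\|u^N-u(T)\|+\|u(T)-y(T)\|$, invokes Theorem~\ref{APTH} and Theorem~\ref{GCODE} to bound the first and third terms by $\varepsilon$, and appeals to the forward Euler scheme \eqref{CUdis} for the middle term. Your discussion is in fact more careful than the paper's, since you flag that Theorem~\ref{GCODE} is stated only qualitatively and explain how to extract the $O(\varepsilon)$ rate; the paper simply asserts $\|u(T)-y(T)\|\lesssim\varepsilon$ by citing Theorem~\ref{GCODE} without further comment.
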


\begin{proof}
	
By the triangle inequality, we can obtain that
$$\Vert y^N-y(T)\Vert\le\Vert y^N-u^N\Vert+\Vert u^N-u(T)\Vert+\Vert u(T)-y(T)\Vert.$$
According to Theorem~\ref{GCODE} and Theorem~\ref{APTH},  we have
$$\Vert y^N-u^N\Vert+\Vert u(T)-y(T)\Vert\lesssim \varepsilon.$$
Then \eqref{Estimateoreder} holds due to \eqref{CUdis}.
	
\end{proof}

\subsubsection{Uniform estimates of \eqref{ASI2}}
For scheme (\ref{ASI2}), it follows:
\begin{theorem}\label{APTH2}
Under the assumptions of theorem (\ref{APTH}), the estimate (\ref{Estimate1}) holds for $(y^n,v^n)$ to equation (\ref{ASI2}) while $u^n$ is the numerical solution of the guiding-center model
\begin{equation}\label{CUdis2}
u^{n+1}=u^{n}+(1-\gamma)\Delta t \mathcal{R}(u^{n})+\gamma\Delta t \mathcal{R}(u^{(1)}),\quad u^{(1)}=u^{n}+\frac{\Delta t}{2\gamma}\mathcal{R}(u^{n}),\quad u^0=y^0.
\end{equation}
\end{theorem}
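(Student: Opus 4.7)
}

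The plan is to mirror the proof of Theorem~\ref{APTH}, but now tracking both stages of the IMEX scheme~\eqref{ASI2} in parallel. Introducing the stage-wise deviations from the instantaneous drift velocity
\[
z_n^{(1)}=\varepsilon^{-1}v_n^{(1)}-(b^{n})^{-1}KN(y^{n})\tilde{\bm{E}}(y^{n}),\qquad z^{n+1}=\varepsilon^{-1}v^{n+1}-(b_{*}^{n})^{-1}KN(y_n^{(2)})\tilde{\bm{E}}(y_n^{(2)}),
\]
with $b_{*}^{n}:=b(F(y_n^{(2)}))$, encodes the idea that in the stiff limit the velocity should align with the $E\times B$ drift. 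Solving the implicit pieces of~\eqref{ASI2} with the closed-form identity $(I-\beta K)^{-1}=(I+\beta K)/(1+\beta^{2})$, as was used in the first-order case, yields explicit expressions
\[
v_n^{(1)}=\tfrac{I+\gamma\lambda b^{n}K}{1+(\gamma\lambda b^{n})^{2}}\bigl(v^{n}+\gamma\tau N(y^n)\tilde{\bm{E}}(y^n)\bigr),
\]
and an analogous formula for $v^{n+1}$ in terms of $v^{n}+(1-\gamma)\tau F_n^{(1)}+\gamma\tau N(y_n^{(2)})\tilde{\bm{E}}(y_n^{(2)})$ with resolvent $(I-\gamma\lambda b_{*}^{n}K)^{-1}$.

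Next, I would derive recursions for $z^{n+1}$ in terms of $z^{n}$. Each resolvent damps the "non-drift" component by a factor $\Lambda_{\gamma}:=\|(1+\gamma\lambda b)/(1+(\gamma\lambda b)^{2})\|_{\infty}$, which for fixed $\Delta t$ and $\varepsilon\to 0$ behaves like $1/(\gamma\lambda b_{0})=O(\varepsilon^{2})$. Applying the same bookkeeping as in the first-order proof, the residual source terms are controlled by differences of the form $(b^{n})^{-1}KN(y^{n})\tilde{\bm{E}}(y^{n})-(b_{*}^{n-1})^{-1}KN(y_n^{(2)-1})\tilde{\bm{E}}(y_n^{(2)-1})$, which by the $W^{1,\infty}$ bound on $\bm{E}$ and the assumed compactness are $O(\Delta t)$ in magnitude. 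This gives $\|z^{n+1}\|\le a^{n}c+b/(1-a)$ with the same constants $a,b,c$ as in the proof of Theorem~\ref{APTH} (up to harmless $\gamma$-dependent factors), provided $\lambda\ge\lambda_{0}$.

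Having done this, the $y$-update can be rewritten as
\[
y^{n+1}=y^{n}+(1-\gamma)\Delta t N(y^{n})^{\top}z_n^{(1)}+\gamma\Delta t N(y_n^{(2)})^{\top}z^{n+1}+(1-\gamma)\Delta t\,\mathcal{R}(y^{n})+\gamma\Delta t\,\mathcal{R}(y_n^{(2)}),
\]
which isolates the noisy contribution (the first two terms, bounded by $\varepsilon^{2}$ via the $z$-estimates) and the "ideal" two-stage guiding-centre step in the last two terms. Since $y_n^{(2)}=y^{n}+(\Delta t/2\gamma)N(y^{n})^{\top}v_n^{(1)}=y^{n}+(\Delta t/2\gamma)\mathcal{R}(y^{n})+O(\varepsilon^{2})$, and by the Lipschitz continuity of $\mathcal{R}$ inherited from the regularity assumption, the latter pair matches $(1-\gamma)\Delta t\,\mathcal{R}(u^{n})+\gamma\Delta t\,\mathcal{R}(u^{(1)})$ up to an error $C\Delta t(\|e^{n}\|+\varepsilon^{2})$ with $e^{n}=y^{n}-u^{n}$. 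Substituting this into a discrete Gronwall argument, exactly as in the final step of Theorem~\ref{APTH}, yields~\eqref{Estimate1}.

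The main obstacle is the bookkeeping: the two stages introduce two distinct resolvents $(I-\gamma\lambda b^{n}K)^{-1}$ and $(I-\gamma\lambda b_{*}^{n}K)^{-1}$ evaluated at different spatial points, and one must ensure that the cross terms arising from $b^{n}\neq b_{*}^{n}$ produce only $O(\Delta t)$ (and not $O(1)$) perturbations in the recursion for $z^{n+1}$. This is handled by exploiting the identity $\|y_n^{(2)}-y^{n}\|\lesssim\Delta t(\|\mathcal{R}\|_{\infty}+\varepsilon\|z_n^{(1)}\|)$ together with the Lipschitz continuity of $b$, $N$ and $\tilde{\bm{E}}$, so the overall error propagation constant still satisfies $a<1$ for $\lambda\ge\lambda_{0}$ large enough.
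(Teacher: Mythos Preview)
Your proposal is correct and follows essentially the same route as the paper's proof. The paper also introduces two stage-wise drift deviations (written there as $z^n_1=\varepsilon^{-1}v_n^{(1)}-\mathcal F(y^{n-1})$ and $z^n_2=\varepsilon^{-1}v_n-\mathcal F(y_{n-1}^{(2)})$, with $\mathcal F(y)=D_F(y)\mathcal R(y)=b^{-1}KN(y)\tilde{\bm E}(y)$, i.e.\ exactly your drift reference), derives a coupled recursion damped by $\Lambda_\gamma$, bounds $\|z_1^{n+1}\|+\|z_2^{n+1}\|\le C\Lambda_\gamma(\|z_1^{n}\|+\|z_2^{n}\|+\Delta t)$, and then closes with the same Gronwall comparison to~\eqref{CUdis2}; the only cosmetic difference is that the paper anchors the drift references one index earlier than you do, which is immaterial once the $O(\Delta t)$ Lipschitz bounds on $\mathcal F$ are invoked.
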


\begin{proof}
The proof is similar to Theorem~\ref{APTH}, we briefly present the key points.
By using $\mathcal{F}(y)=D_F(y)\mathcal{R}(y)$, we can define
\begin{equation*}
z^n_{1}=\varepsilon^{-1}v_n^{(1)}-\mathcal{F}(y^{n-1}),\quad z^n_{2}=\varepsilon^{-1}v_n-\mathcal{F}(y_{n-1}^{(2)}).
\end{equation*}
Then due to scheme (\ref{ASI2}), it follows that
\begin{align*}
&z^{n+1}_{1}=\left(I-\gamma\lambda b^n K\right)^{-1}(z_2^n+\mathcal{F}(y_{n-1}^{(2)})-\mathcal{F}(y_{n})),\\
&z^{n+1}_{2}=\left(I-\gamma\lambda b(F(y_n^{(2)})) K\right)^{-1}(z_2^n+\mathcal{F}(y_{n-1}^{(2)}))-\mathcal{F}(y_{n}^{(2)}+(1-\gamma)\lambda b^n Kz_1^{n+1}).
\end{align*}
By denoting $\Lambda=\left\Vert\frac{1+\lambda\gamma b(F(y))}{1+(\lambda\gamma b(F(y)))^2}\right\Vert_{\infty}$, it has
\begin{equation}\label{errz12}
\begin{aligned}
&\Vert z^{n+1}_{1}\Vert\le C(\kappa)\Lambda(\Vert z^{n}_{2}\Vert+\Delta t\Vert z^{n}_{1}\Vert+\Delta t(1+\gamma)),\\
&\Vert z^{n+1}_{2}\Vert\le C(\kappa)\Lambda\left(\left(\frac{\Delta t}{2\gamma}+(1-\gamma)\lambda\right)\Vert z^{n+1}_{1}\Vert+\Delta t\Vert z^{n}_{1}\Vert+\Vert z^{n}_{2}\Vert+\Delta t\right),
\end{aligned}
\end{equation}
due to
\begin{align*}
&\Vert y^{(2)}_{n-1}-y^n\Vert\le C(\kappa)\tau\Vert v_{n-1}^{(1)}-\gamma v^n \Vert \le C(\kappa)\Delta t(\Vert z^{n}_{1}\Vert+\Vert z^{n}_{2}\Vert+(1+\gamma)),\\
&\Vert y^{(2)}_{n-1}-y^{(2)}_{n}\Vert\le C(\kappa)\Delta t\left(\frac{1}{2\gamma}\Vert z^{n+1}_{1}\Vert+\Vert z^{n}_{1}\Vert+\gamma\Vert z^{n}_{2}\Vert+1\right).
\end{align*}
Then (\ref{errz12}) leads to
\begin{equation*}
\Vert z^{n+1}_{1}\Vert+\Vert z^{n+1}_{2}\Vert\le C(\kappa)\Lambda(\Vert z^{n}_{1}\Vert+\Vert z^{n}_{2}\Vert+\Delta t).
\end{equation*}
On the other hand, by taking difference between $y^{n+1}$ and $u^{n+1}$, (\ref{Estimate1}) holds due to
\begin{equation*}
\Vert e^{n+1}\Vert\le C(\kappa)(\Vert e^{n}\Vert+\Delta t\Vert z^{n+1}_{1}\Vert+\Delta t\Vert z^{n+1}_{2}\Vert)
\end{equation*}
and
\begin{align*}
&z^1_{1}=\frac{I+\gamma\lambda b^0 K}{1+(\gamma\lambda b^0)^2}\left(\varepsilon^{-1}v^0-(b^{0})^{-1}KN(y^0)\tilde{\bm{E}}(y^{0})\right),\\
&z^1_{2}=\frac{I+\gamma\lambda b(F(y_{0}^{(2)})) K}{1+\left(\gamma\lambda b(F(y_{0}^{(2)}\right)^2}\left(\varepsilon^{-1}v^0+(1-\gamma)\lambda b^0 z^1_{1}-\mathcal{F}(y_{0}^{(2)})\right).
\end{align*}

\end{proof}

\begin{corollary}\label{orederTH2}
Under the assumptions of Theorem~\ref{APTH}, the proposed scheme \eqref{ASI2} is of order $2$ when $\varepsilon\ll\Delta{t}^2$. 
More precisely, it has
\begin{equation*}
\Vert y^N-y(T)\Vert\lesssim \Delta{t}^2,
\end{equation*}
for $T=N\Delta{t}$.
\end{corollary}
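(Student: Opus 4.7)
The plan is to mimic the proof of Corollary~\ref{orederTH} and split the error by a triangle inequality, replacing the first-order consistency analysis of the reduced scheme by a second-order one. Specifically, I would write
\[
\Vert y^N-y(T)\Vert \le \Vert y^N-u^N\Vert + \Vert u^N-u(T)\Vert + \Vert u(T)-y(T)\Vert,
\]
where $u^n$ is the sequence defined by the reduced scheme (\ref{CUdis2}) and $u(T)$ is the exact solution of the guiding-center ODE (\ref{GCeq1}) at time $T=N\Delta t$. The outer two terms are already available from previously stated results; the middle term is the only genuinely new ingredient.

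For the first term, Theorem~\ref{APTH2} gives directly $\Vert y^N-u^N\Vert \lesssim \varepsilon^{2}$ uniformly on $[0,T]$. For the third term, Theorem~\ref{GCODE} together with the $\varepsilon$-rate of convergence of characteristics to the guiding-center limit (as implicit in the proof of Theorem~\ref{APTH}) yields $\Vert u(T)-y(T)\Vert \lesssim \varepsilon$. For the middle term I would identify (\ref{CUdis2}) as an explicit two-stage Runge--Kutta scheme applied to $\dot u = \mathcal{R}(u)$, with Butcher tableau having $c_2=1/(2\gamma)$, $a_{21}=1/(2\gamma)$, $b_1=1-\gamma$, $b_2=\gamma$. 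The consistency condition $b_1+b_2=1$ is obvious, and the second-order condition reduces to $b_2 c_2 = \gamma\cdot\frac{1}{2\gamma} = \frac{1}{2}$, which holds for every admissible $\gamma$, in particular for $\gamma=1-1/\sqrt{2}$. Since $\tilde{\bm E}\in W^{1,\infty}$ and the coordinate map is smooth on the compact set $\tilde{\Omega}_y$, $\mathcal{R}$ is sufficiently regular for the standard Runge--Kutta convergence theorem to give $\Vert u^N-u(T)\Vert \lesssim \Delta t^{2}$. Adding the three bounds gives $\Vert y^N - y(T)\Vert \lesssim \varepsilon^2+\varepsilon+\Delta t^2$, and under the hypothesis $\varepsilon \ll \Delta t^2$ the $\varepsilon$ contributions are absorbed into $\Delta t^2$.

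The main obstacle is the middle term: one must convincingly identify (\ref{CUdis2}) as a genuine classical two-stage RK method for the guiding-center ODE and verify the second-order order conditions; the unusual normalization $1/(2\gamma)$ at the intermediate node is what makes everything consistent, and this is precisely the point of the particular choice of $\gamma$ in the IMEX-RK pair of (\ref{ASI2}). A secondary subtlety is that Theorem~\ref{GCODE} as stated only asserts uniform convergence without an explicit rate; one has to invoke (or re-derive) the standard $\mathcal{O}(\varepsilon)$ rate for well-prepared initial data, which is already implicit in the $z^n$ analysis used to establish Theorems~\ref{APTH} and~\ref{APTH2}. Once those two points are clarified, the combination is essentially a routine triangle-inequality argument.
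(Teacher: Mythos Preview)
Your proposal is correct and follows exactly the approach the paper takes: the paper omits an explicit proof of Corollary~\ref{orederTH2}, but the intended argument is the same triangle-inequality splitting used in the proof of Corollary~\ref{orederTH}, with the reduced scheme \eqref{CUdis2} now playing the role of \eqref{CUdis}; your verification that \eqref{CUdis2} satisfies the second-order Runge--Kutta conditions ($b_1+b_2=1$, $b_2c_2=\gamma\cdot\tfrac{1}{2\gamma}=\tfrac12$) is precisely the missing ingredient. One minor imprecision: estimate \eqref{Estimate1} in general only gives $\Vert y^N-u^N\Vert\lesssim\varepsilon$ (not $\varepsilon^2$) for unprepared data, since the factor $\Vert\varepsilon^{-1}v^0-\cdots\Vert$ is typically $\mathcal{O}(\varepsilon^{-1})$; this does not affect the conclusion under $\varepsilon\ll\Delta t^2$.
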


\section{Numerical experiments}

In this section, we display some examples to verify the analysis results. 
By simulating the motion of a single particle, the asymptotic-preserving properties of schemes can be observed and compare to our theoretical results.
Then an application of a $2+2$-dimensional Vlasov system is presented to display the advantage of our algorithm.
Last but not least, we consider the polar coordinate transformation $x_1=r\cos(\theta),\  x_2=r\sin(\theta)$ for $y=(r,\theta)$.

\subsection{Charged particle dynamics}
First, a benchmark numerical test is carried out to show the efficiency of our proposed algorithms and verify the uniform estimate of them.
We consider the single particle motion with the electromagnetic field  
$$\bm{E}(x)=-x,\ b(x)=1+\varepsilon\sin(\sqrt{x_1^2+x_2^2}),$$
and the initial data 
$$y(0)=(0.36,0.6)^\top,\ v(0)=(-0.7,0.08)^\top.$$

\begin{figure}[h!]
\centering
\subfigure[]{
\includegraphics[scale=.45]{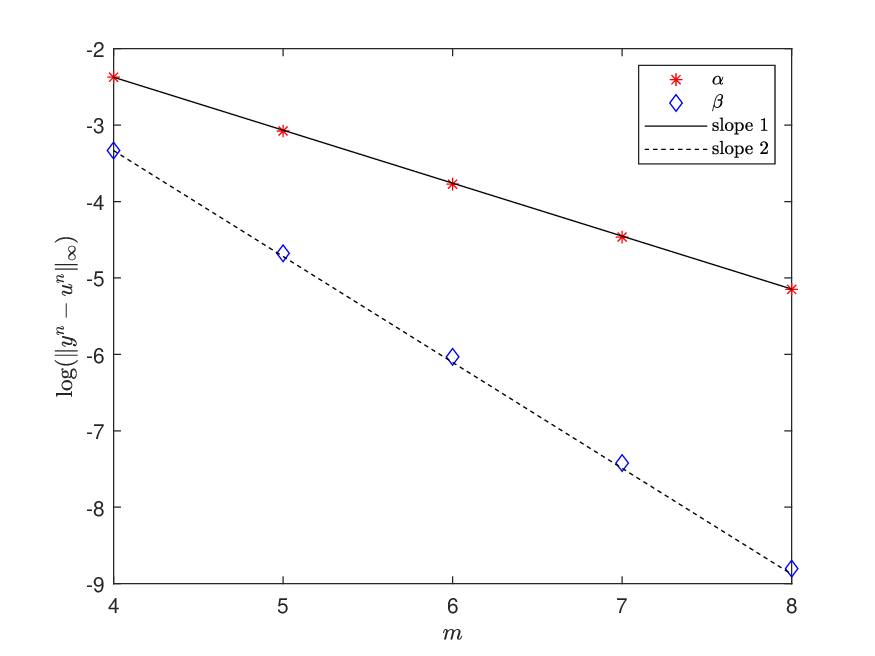}
}
\subfigure[]{
\includegraphics[scale=.45]{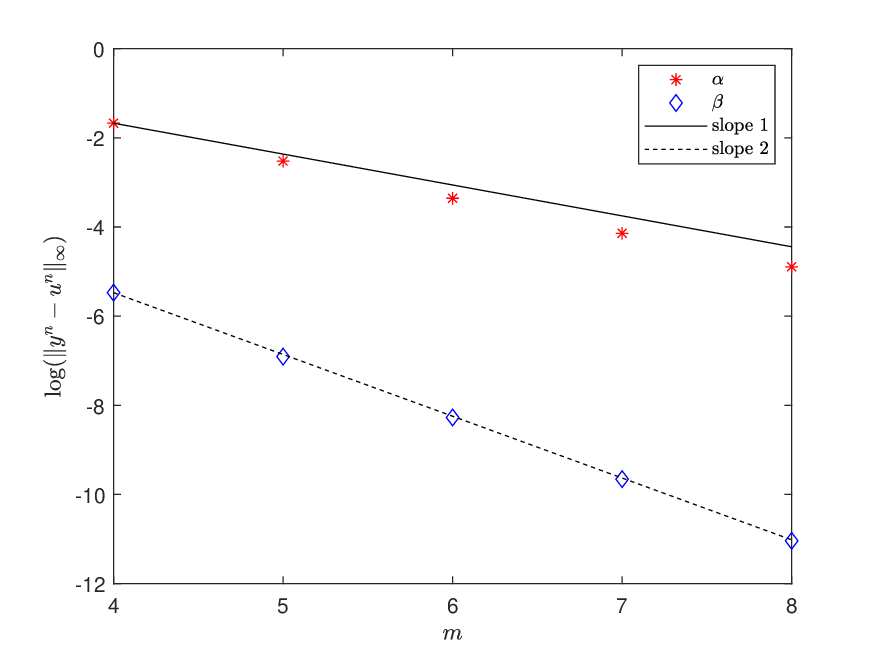}
}
\caption{Uniform estimates of semi-implicit schemes with time step $\Delta t=0.1$ for $\varepsilon=2^{-m}$ and $T=10$. $\alpha$: Original initial data. 
$\beta$: Modified initial data $v^0=\varepsilon(b^{0})^{-1}KN(y^0)\tilde{\bm{E}}(y^{0})$. (a): APSI1 (\ref{ASI}). (b): APSI2 (\ref{ASI2}).}
\label{img1}
\end{figure}

In Figure~\ref{img1}, it can be observed that numerical errors match our theoretical estimations well (by noticing $\Vert\varepsilon^{-1}v^0-(b^{0})^{-1}KN(y^0)\tilde{\bm{E}}(y^{0})\Vert=\mathcal{O}(\varepsilon^{-1})$, it can be obtained that $\Vert e^n\Vert=\mathcal{O}(\varepsilon)$).
After modifying the initial data $v^0$, it leads to $\Vert e^n\Vert=\mathcal{O}(\varepsilon^{2})$.
We validated the uniform accuracy of the two numerical algorithms with respect to $\varepsilon$ in Figures~\ref{img4} and~\ref{img5}, respectively.
The reference solutions were obtained using the Boris algorithm by small time steps ($\Delta{t}=\mathcal{O}(\varepsilon^2)$). 
From the figures, it can be observed that when $\varepsilon$ is sufficiently small, the errors of our algorithms are independent of $\varepsilon$ and achieve the theoretical order of convergence.

\begin{figure}[h!]
\centering
\subfigure[]{
\includegraphics[scale=.45]{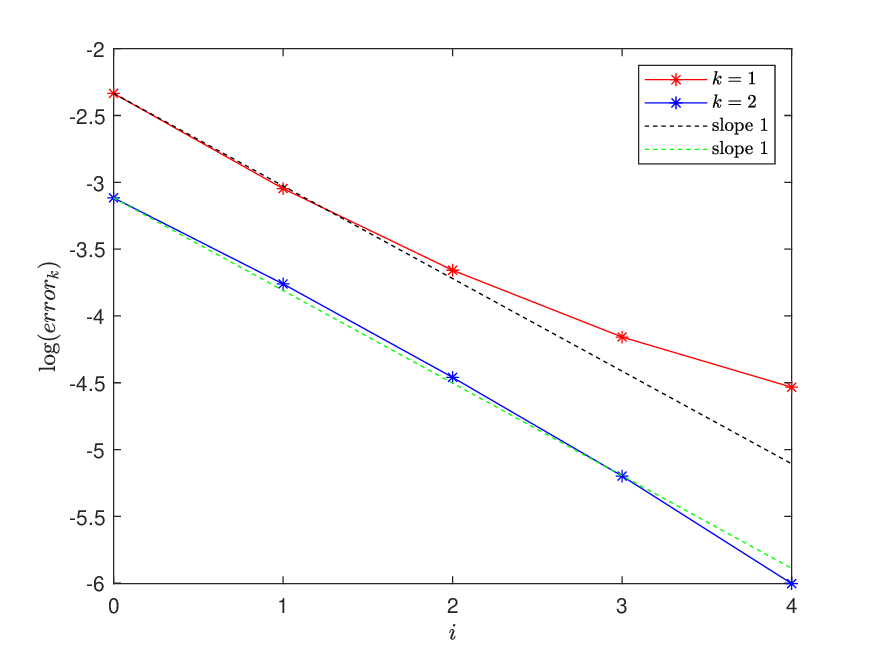}
}
\subfigure[]{
\includegraphics[scale=.45]{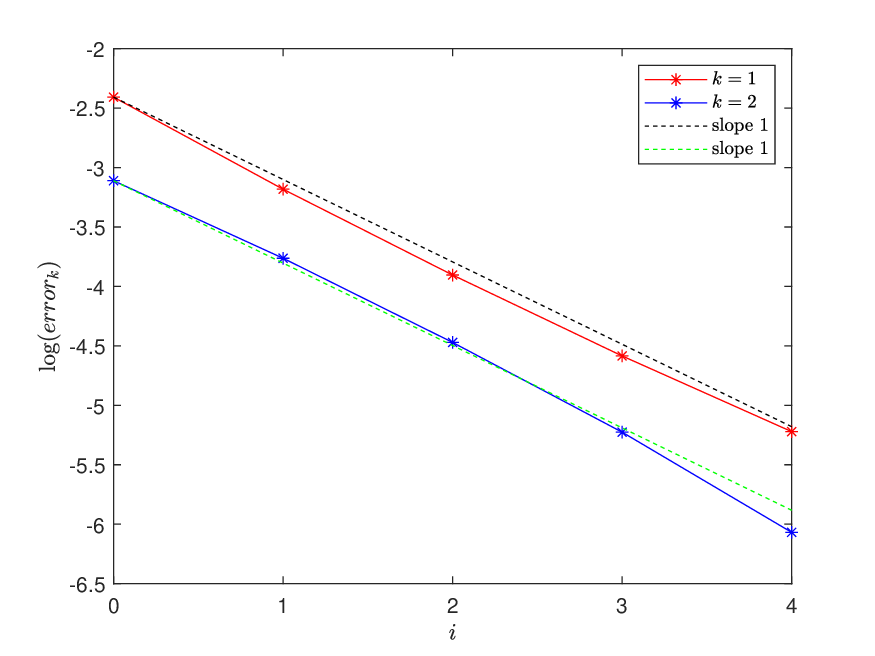}
}
\caption{Estimates $error_k=\vert y_k^N-y_k(T)\vert$ of semi-implicit scheme \eqref{ASI} with time step $\Delta t=\frac{\pi}{20}2^{-i}$ at final time $T=\pi$. (a): $\varepsilon=10^{-2}$. (b): $\varepsilon=10^{-3}$.}
\label{img4}
\end{figure}

\begin{figure}[h!]
\centering
\subfigure[]{
\includegraphics[scale=.45]{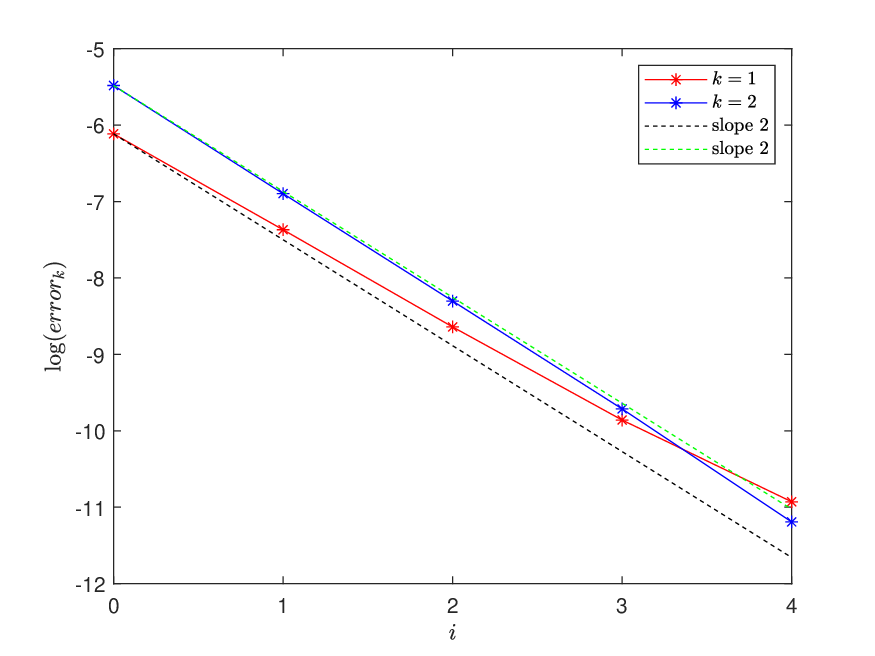}
}
\subfigure[]{
\includegraphics[scale=.45]{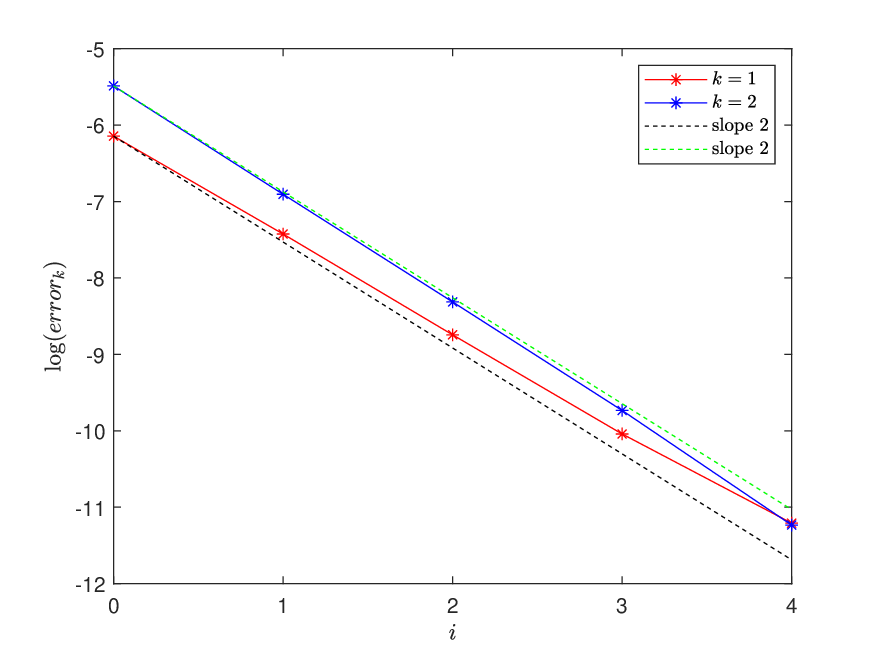}
}
\caption{Estimates $error_k=\vert y_k^N-y_k(T)\vert$ of semi-implicit scheme \eqref{ASI2} with time step $\Delta t=\frac{\pi}{5}2^{-i}$ at final time $T=\pi$. (a): $\varepsilon=10^{-3}$. (b): $\varepsilon=10^{-4}$.}
\label{img5}
\end{figure}

\begin{figure}[h!]
\centering
\subfigure[]{
\includegraphics[scale=.33]{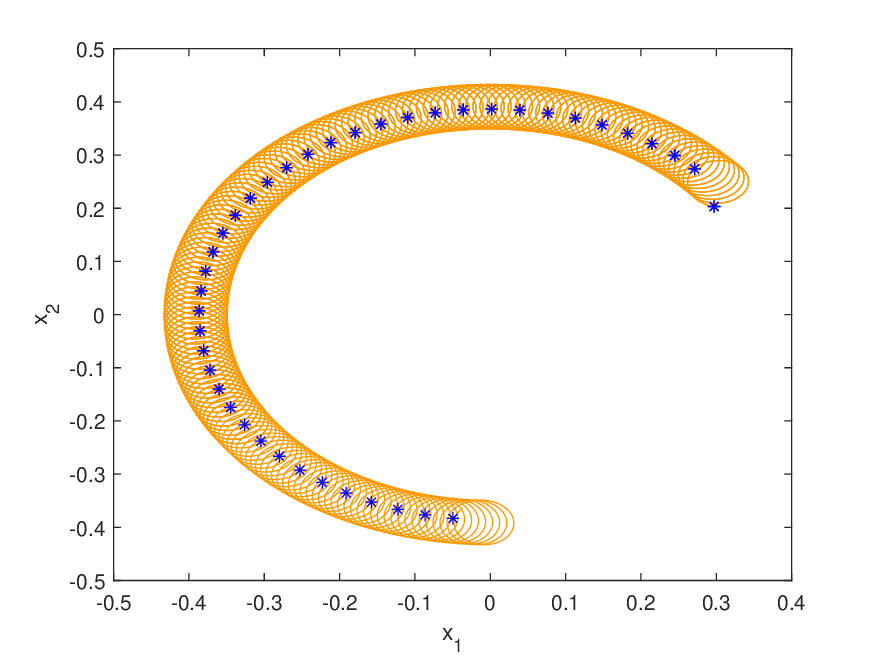}
}
\subfigure[]{
\includegraphics[scale=.33]{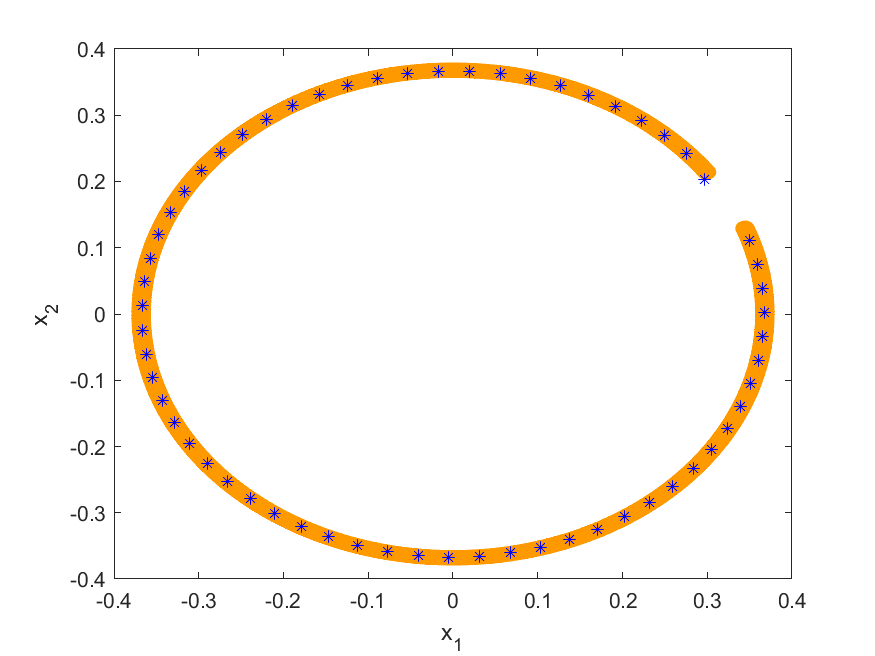}
}
\subfigure[]{
\includegraphics[scale=.3]{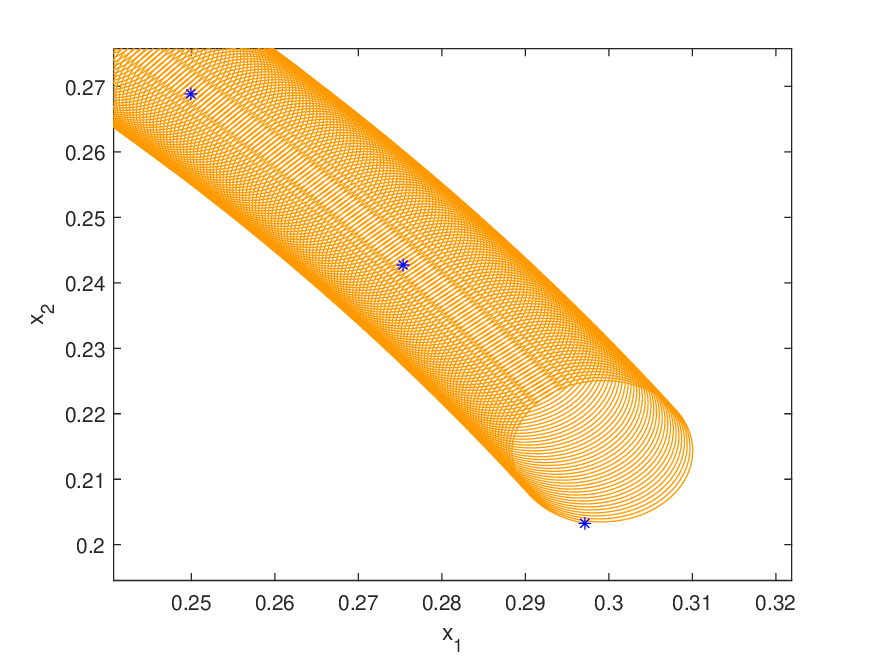}
}
\caption{Comparison between a reference solution (Boris method with time step $\varepsilon^2$) and the solution of algorithm (\ref{ASI}) with time step $\Delta t=0.1$. (a): $\varepsilon=2^{-4},T=1/\sqrt{\varepsilon}$. (b): $\varepsilon=2^{-6},T=0.75/\sqrt{\varepsilon}$. (c): A zoom of the left part.}
\label{img2}
\end{figure}

\begin{figure}[h!]
\centering
{
\includegraphics[scale=.4]{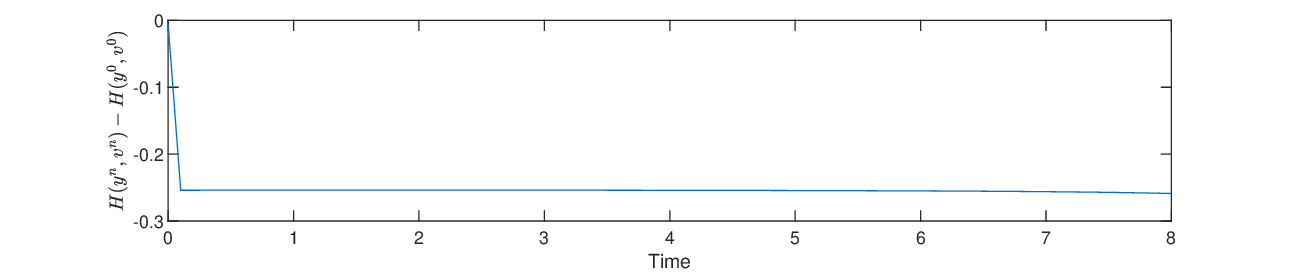}
}
\caption{Time evolutions of error of energy with time step $\Delta t=0.1$ for $\varepsilon=2^{-8}$ and $T=0.5/\sqrt{\varepsilon}$. Here, $\bm{E}(x)=-\nabla\phi(x)$ with $\phi=1/3(x_1^3+x_2^3)$.}
\label{img3}
\end{figure}

Since numerical results of two algorithms in Figure~\ref{img2} and Figure~\ref{img3} are nearly the same, so we just plot one of them.
And they show our method is able to capture the high oscillations of the solution even with a very coarse time step and preserve the energy well over long time.
At this time, orbit calculated by our numerical method parallels to the guiding center line.
In (c) of Figure~\ref{img2}, it can be observed that the particle directly moves to this orbit at the first step.
Thus, the error of energy increases at the beginning in Figure~\ref{img3}.
Thereafter, the energy of the system hardly changes for a long time.
While our temporal discretization does not strictly conserve energy, its asymptotic-preserving property ensures well-controlled error growth.

\subsection{Diocotron instability}
In this experiment, the magnetic field is uniform with $\tilde{b}(y)=1$ and initial distribution function is taken as
\begin{equation*}
\tilde{f}_0(y,v)=\frac{\tilde{\rho}_0(y)}{2\pi}\exp(-\frac{{\vert v\vert}^2}{2}),\quad y=(r,\theta)\in\mathbb{R}_{+}\times\mathbb{R},
\end{equation*}
where the initial density is
\begin{equation*}
\tilde{\rho}_0(\mathbf{x})=
\begin{cases}
(1+{\alpha}\cos(l\theta))\exp(-4(r-6.5)^2)&\mbox{if $r^-\leq r\leq{r}^+$,} \\
0&\mbox{otherwise,}
\end{cases}
\end{equation*}
with $l$ the number of vortices. 
In our simulation, we take $r^-=5,r^+=8,\alpha=0.2$. 
The parameter $\varepsilon$ is used to control the strength of the magnetic field.
We consider the particle equations on the orthogonal planes to magnetic field after transformation, that is
\begin{equation*}
\left\{
\begin{aligned}
&\varepsilon\dot{y}=N^{T}v,\\
&\varepsilon\dot{v}=N\tilde{\bm{E}}_h+\frac{1}{\varepsilon}Kv,\\
&\tilde{\bm{E}}_h=-\nabla_y\tilde{\phi}_h,\\
&\int_{\tilde{\Omega}_y}\left(r\frac{\partial\tilde{\phi}_h}{\partial r}\frac{\partial \psi}{\partial r}+\frac{1}{r}\frac{\partial\tilde{\phi}_h}{\partial \theta}\frac{\partial\psi}{\partial \theta}\right)dy=\int_{\tilde{\Omega}_y}\psi\sum_{s=1}^{N_p}\alpha_s \zeta_r(y-Y_s(t)) dy,\ \forall\psi\in H^1_0(\tilde{\Omega}_y).
\end{aligned}
\right.
\end{equation*}
where
$N=\left(
                \begin{array}{cc}
                  \cos{\theta} & -\frac{\sin{\theta}}{r} \\
                  \sin{\theta} & \frac{\cos{\theta}}{r} \\
                \end{array}
              \right),$ and $\zeta_r$ is a regularizing function for Dirac function.

\begin{figure}[h!]
\centering
\includegraphics[scale=.5]{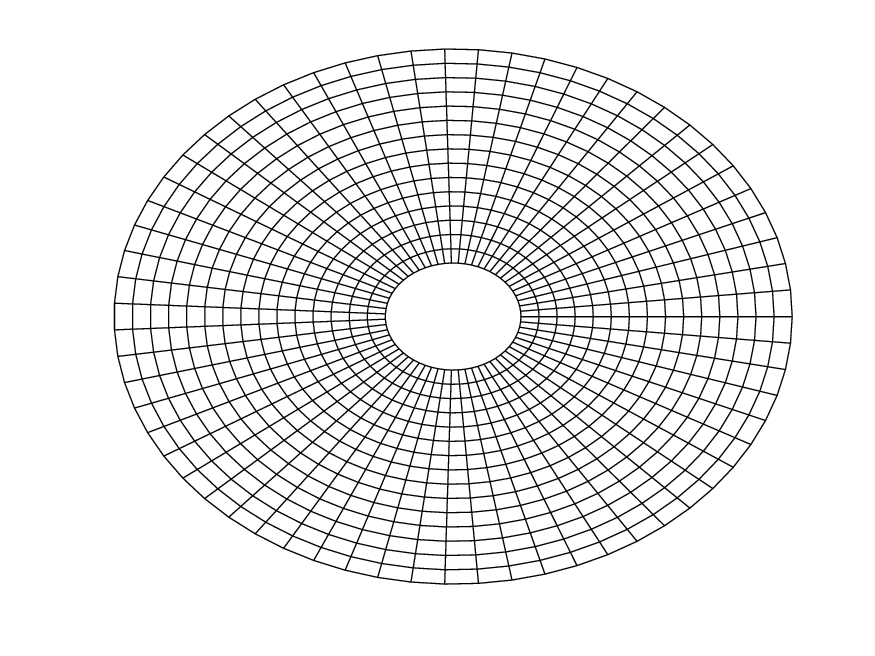}
\caption{Grid of the computational domain.}
\label{imggrid}
\end{figure}

\begin{figure}[!ht]
\centering
\subfigure{
\includegraphics[scale=.45]{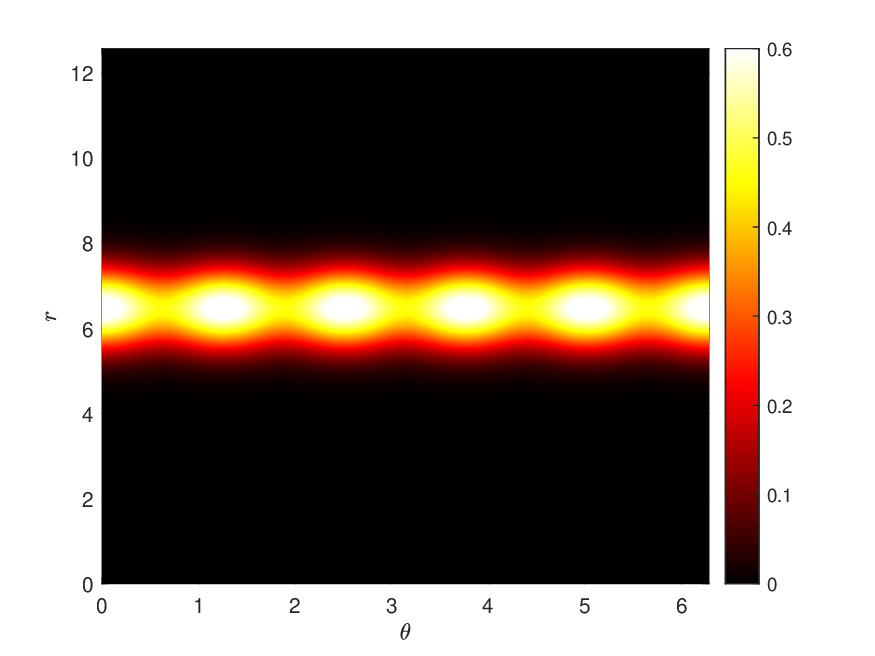}
}
\subfigure{
\includegraphics[scale=.45]{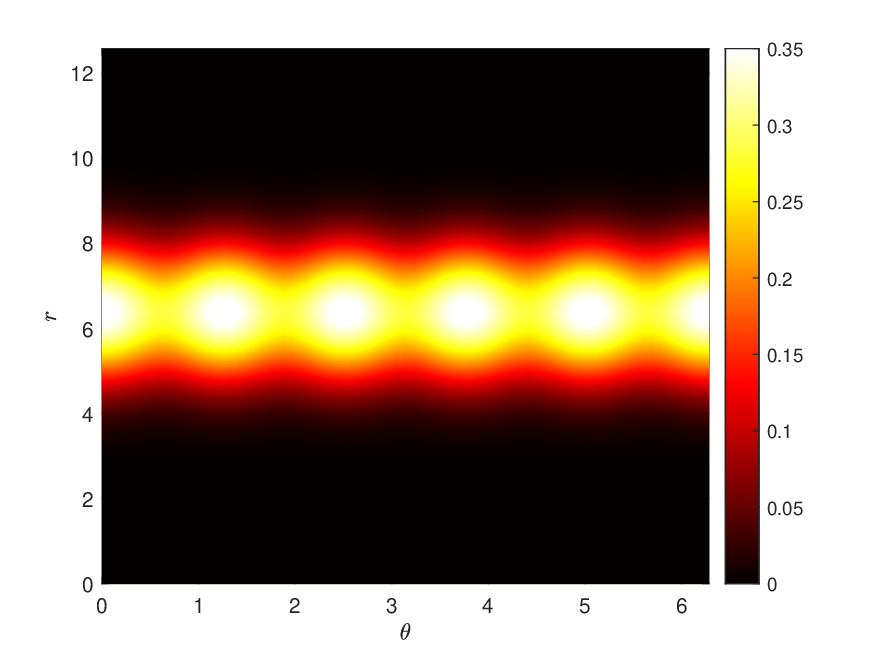}
}
\subfigure{
\includegraphics[scale=.45]{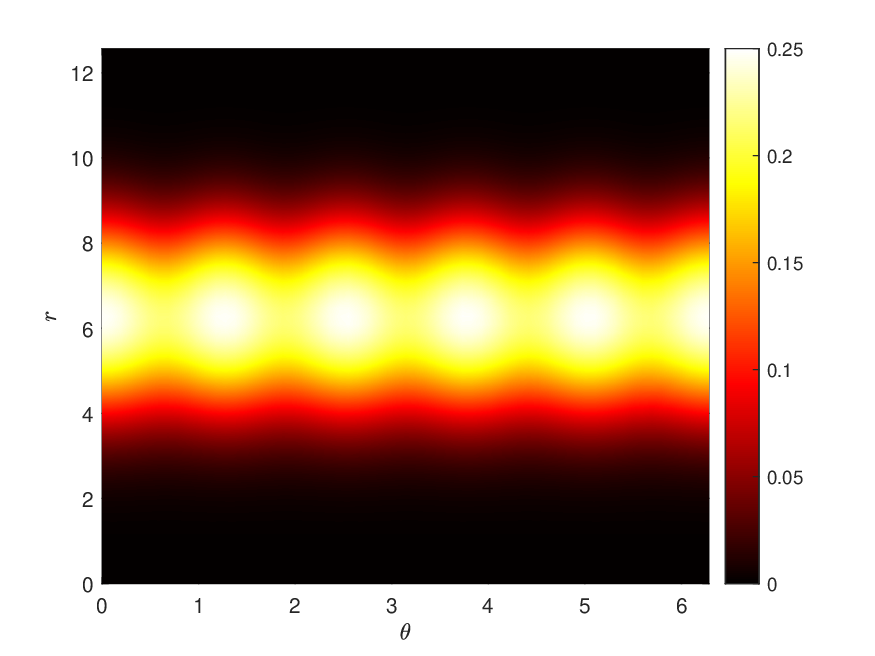}
}
\subfigure{
\includegraphics[scale=.45]{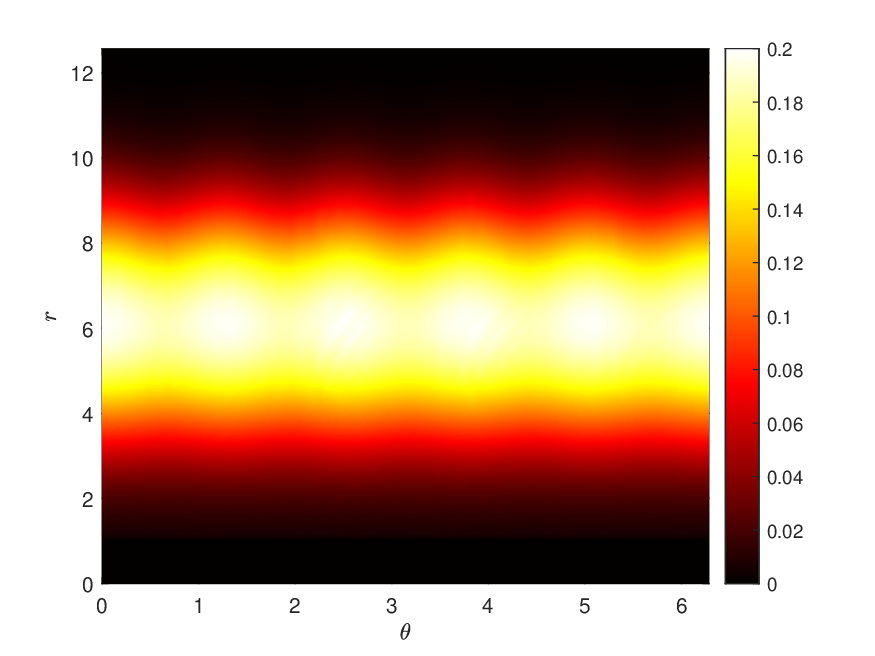}
}
\caption{Time evolution of the density for time $t=0.5,1,1.5,2$ and $l=5$ with parameter $\varepsilon=1$.}
\label{imgCC0}
\end{figure}

\begin{figure}[!ht]
\centering
\subfigure{
\includegraphics[scale=.33]{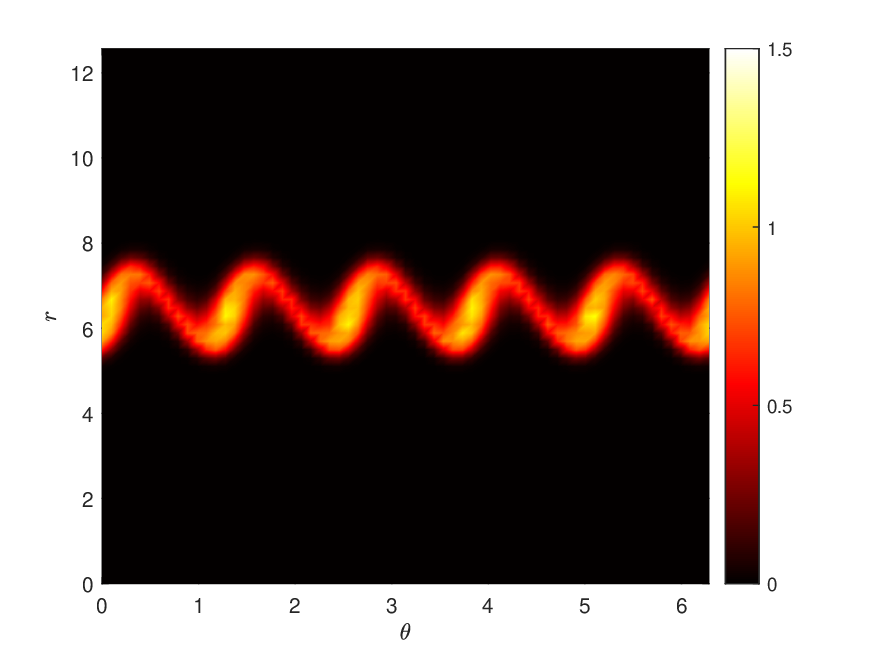}
}
\subfigure{
\includegraphics[scale=.33]{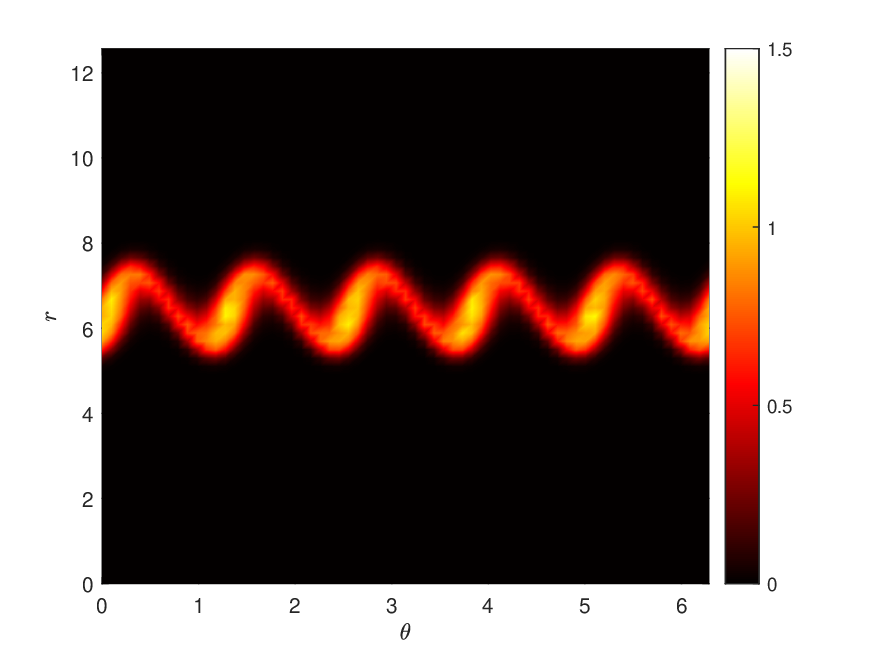}
}
\subfigure{
\includegraphics[scale=.33]{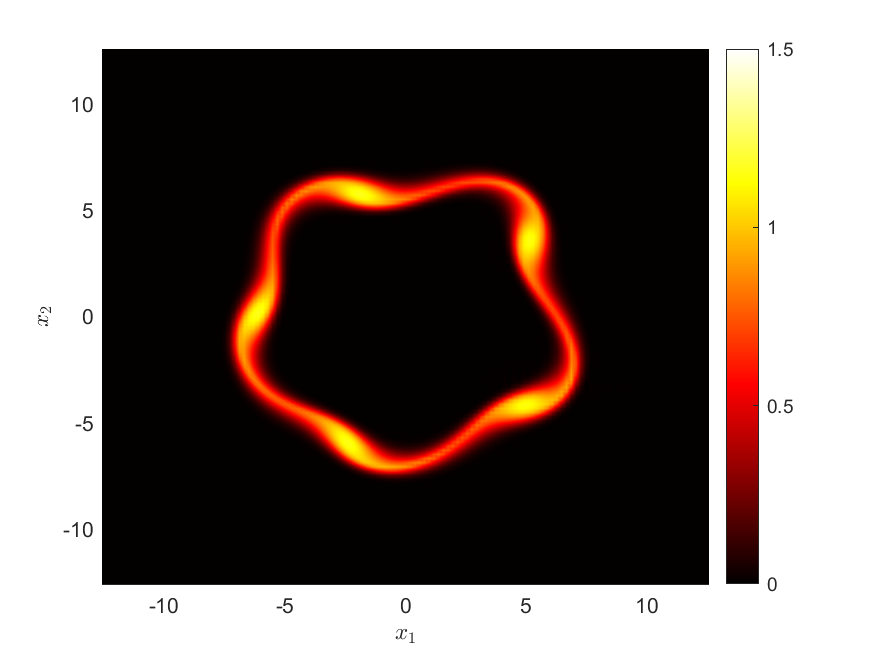}
}
\subfigure{
\includegraphics[scale=.33]{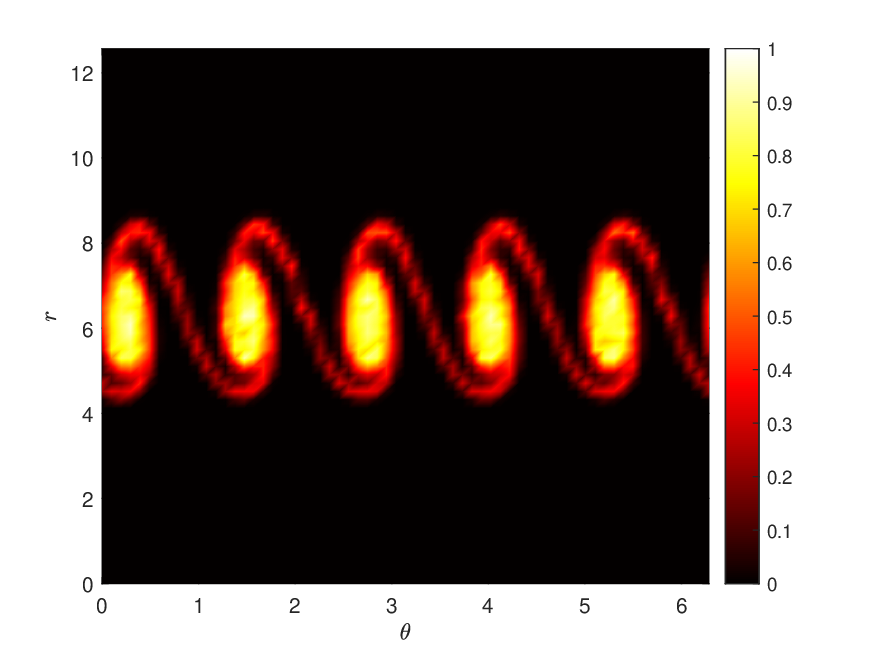}
}
\subfigure{
\includegraphics[scale=.33]{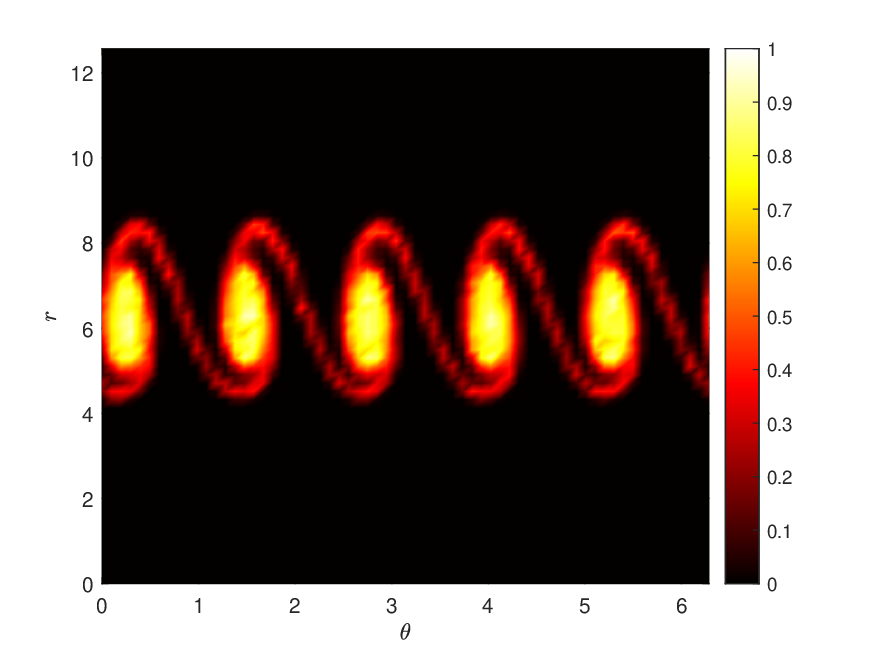}
}
\subfigure{
\includegraphics[scale=.33]{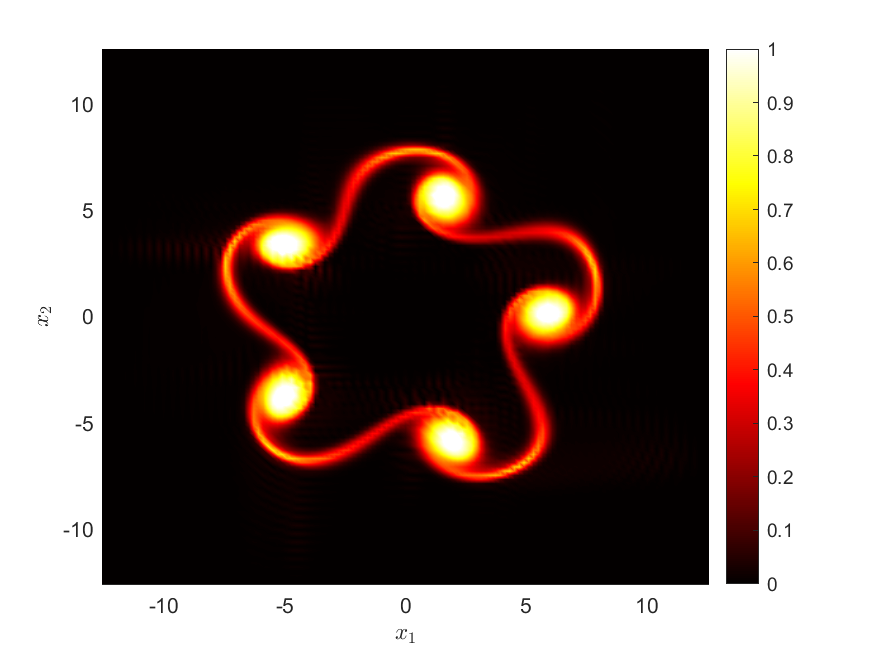}
}
\subfigure{
\includegraphics[scale=.33]{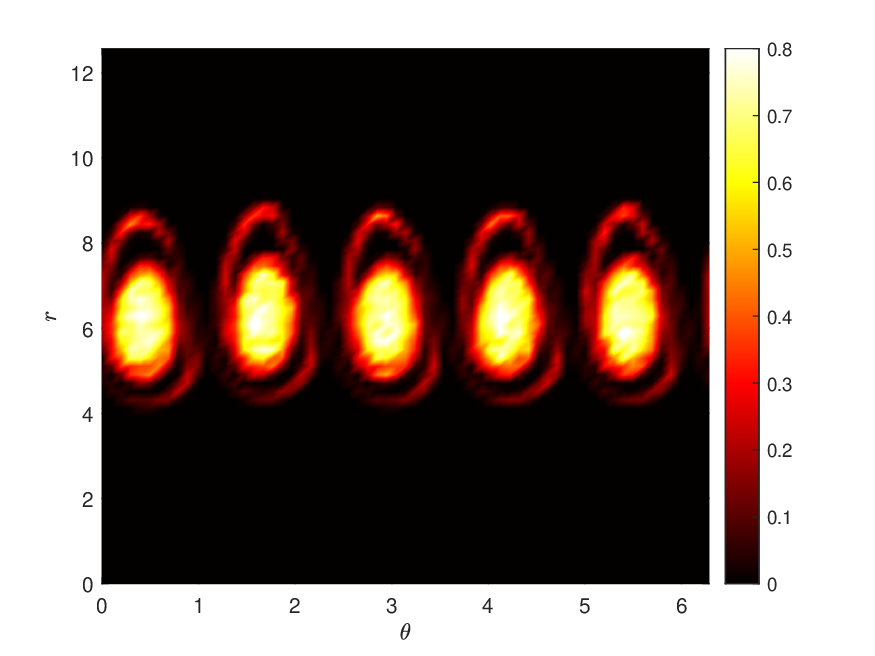}
}
\subfigure{
\includegraphics[scale=.33]{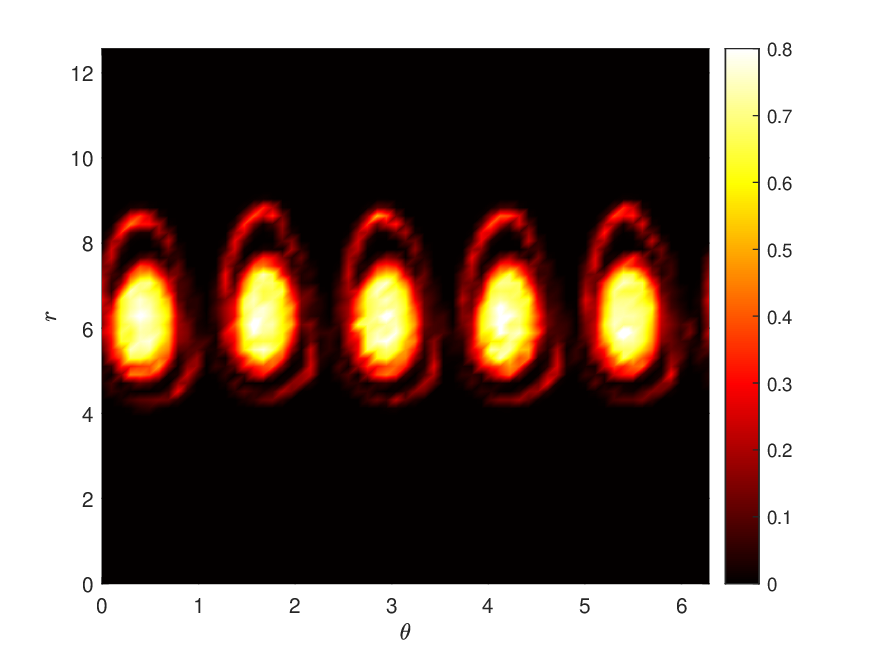}
}
\subfigure{
\includegraphics[scale=.33]{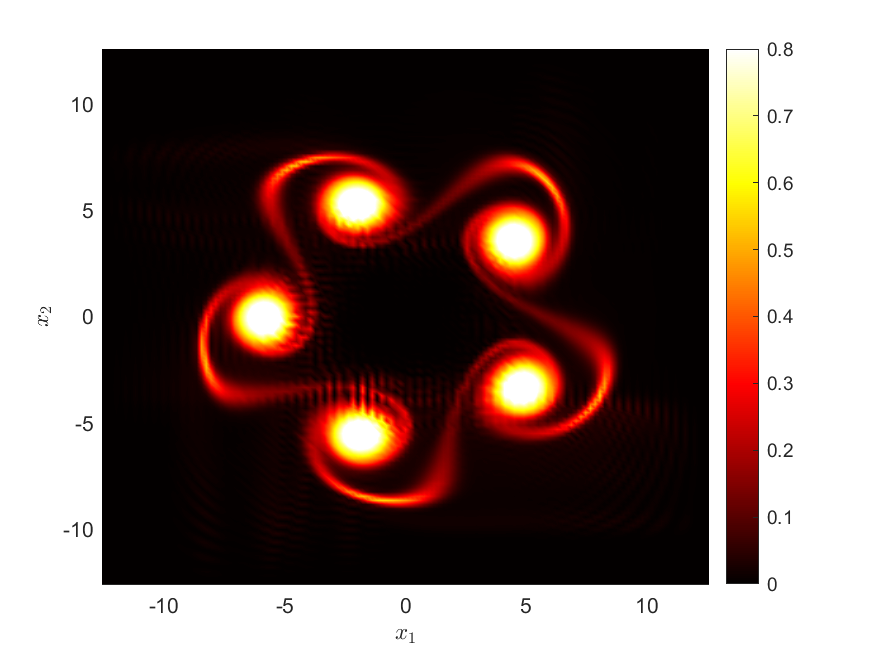}
}
\caption{Time evolution of the density for time $t=10,20,30$ and $l=5$ with parameter $\varepsilon=0.01$. Left column: $r\tilde{\rho}$; Middle column: $r\tilde{\varrho}$; Right column: $\varrho$. Top row: $t=10$; Middle row: $t=20$; Bottom row: $t=30$.}
\label{imgCC}
\end{figure}

For comparison, we choose $(x_1,x_2)\in[-4\pi,4\pi]\times[-4\pi,4\pi]$ in~\cite{GAJ2022Hamiltonian} and $(r,\theta)\in[r_0,4\pi]\times[0,2\pi]$ for now which means we can significantly reduce the computational scale to conduct numerical experiments with the same physical parameters.
Here, $r_0$ is a positive constant introduced to avoid singularities.
Moreover, the number of particles is $1.6\times 10^7$, and for $r$ and $\theta$, we assume Dirichlet boundary conditions and periodic boundary conditions, respectively.
Uniform grid of $r$-$\theta$ plane also equals to divide the computational domain on a ring in $x$-$y$ plane by the approach showed in Figure~\ref{imggrid}.
Meanwhile, the reduction in the computational domain allows us to employ a coarser finite element mesh, or equivalently, a lower resolution. 
The electric field solver constitutes the most computationally intensive and least parallelizable part of the entire program, hence the adoption of the new coordinate system further accelerates the computation. 
Specifically, we employ $N_r=N_{\theta}=64$ here, which would exhibit insufficient resolution under Cartesian coordinates.
For example, in our calculations of the guiding center system in Cartesian coordinates, a spatial resolution of $N_{x_1}=N_{x_2}=128$ was applied. 
As can be seen in Figure~\ref{imgCC}, some minor numerical noise remains due to insufficient resolution.

In Figure~\ref{imgCC0}, we plot the numerical results for $\varepsilon=1$, and the time step $\Delta{t}=0.1$, where $r_0=1$ and a stopping condition is imposed once particles reach the boundary. 
It can be observed that the magnetic field is too weak to confine the particles, leading to their rapid diffusion to the boundary.
Then in Figure~\ref{imgCC}, we show the time evolution of the density when we take $\varepsilon=0.01$, and $\Delta{t}=0.1$.
The $\tilde{\varrho}$ is the solution to the guiding center approximation in orthogonal curvilinear coordinates which reads
$$\partial_t\tilde{\varrho}+\frac{K\tilde{\bm{E}}}{r}\cdot\nabla_y\tilde{\varrho}=0.$$
where $\tilde{\rho}\to\tilde{\varrho}$ when $\varepsilon\to 0$.
On the other hand, $\varrho$ is the one in Cartesian coordinate.
Despite the low resolution and large time step, the instability phenomenon is still captured well which can be verified through comparison with the results of guiding center model.
They are also consistent with the results in Cartesian coordinates, see the right column. 

The initial distribution is inherently defined in the $r$-$\theta$ coordinate system, hence generating initial particles in this coordinate framework aligns more naturally with mathematical intuition.
At the same time, it can also be seen that the use of the new coordinate makes it easier to observe the confining effect of the magnetic field on the plasma, and we can try to further reduce the computational area by using a larger $r_0$.

\section{Conclusion}
In this work, we have developed the Particle-in-Cell methods for solving the magnetized Vlasov--Poisson system in orthogonal curvilinear coordinates.
In numerical computation, we have used the finite element method in space and the semi-implicit method in time.
Also, asymptotic preservation property of the full discretization method is verified.
This guarantees the numerical simulation over long-time.
We present a $2+2$-dimensional example for application, in this example the external magnetic field is strong and measured by $1/\varepsilon$.
It have been studied that our numerical methods can accurately portray the physical phenomena, and demonstrate the confinement affect of the magnetic field on the plasma.
By choosing the appropriate coordinates for the given numerical experiments, we can reveal the conservative property of physical quantity and also can reduce the computation cost in numerical simulation.

After spatial discretization in curvilinear coordinates, we can get the semi-discrete Hamiltonian systems.
However, the semi-implicit algorithms, although asymptotically stable, generally do not preserve the discrete Poisson structure.
Thus the better temporal methods that can preserve the geometric structure and also easy to be  implemented, need to be established.
Also the physical problem in more complex electro-magnetic field is also needed to be investigated.
This will be done in our future work.

\section{Acknowledgments}
The authors thank the reviewers for their helpful advice.
This research is supported by the National Natural Science Foundation of China with Grant No. 12271513.

\appendix
\section{Vector fields in orthogonal curvilinear coordinates}\label{app:0}

We assume that $x=(x_1,x_2,x_3)$ is Cartesian coordinate and $y=(y_1,y_2,y_3)$ is an orthogonal coordinate.
Then we have
$$dx_i=\sum\limits_{j} \frac{\partial x_i}{\partial y_j}dy_j.$$
Further, the quadratic form
$$ds^2=\sum\limits_{i}dx_i^2$$ has the following form in curvilinear coordinate
$$ds^2=\sum\limits_{ij} g_{ij}dy_idy_j,$$
where $$g_{ij}=t_i\cdot t_j$$ and $$t_i=\frac{\partial x}{\partial y_i}$$ is called covariant basis.
At the same time, there exist dual contravariant basis $n_1,n_2,n_3$ satisfying $t_i\cdot n_j=\delta_{ij}$.
And since the coordinate system is orthogonal we have
\begin{equation*}
g_{ij}=
\left\{
\begin{aligned}
&g_{ii},\ i=j,\\
&0,\ i\neq j.
\end{aligned}
\right.
\end{equation*}
Denote the positive numbers $H_i=\sqrt{g_{ii}}$ and they are often called Lame coefficients.
If the corresponding map on the domain is a diffeomorphism, its tangent map is an isomorphism.
That is, the standard orthogonal basis of Cartesian coordinate corresponds to a set of covariant basis in curvilinear coordinate, but in general the latter are not unit vectors, and the moduli of these basis vectors are Lame coefficient $H_i$s.
Therefore the unit basis vectors in the curvilinear coordinate will be
$$e_i=\frac{1}{H_i}\frac{\partial x}{\partial y_i}=\frac{1}{H_i}t_i.$$

On the other hand, with these coefficients, we can consider the differential forms of the curvilinear coordinate.
The volume element can be expressed as
$$dV=H_1 H_2 H_3 dy_1 dy_2 dy_3.$$
For a scalar field $f$, the gradient is
\begin{equation*}
\begin{aligned}
\mathrm{gard} f=\nabla f=\sum_{i}\frac{1}{H_i}\frac{\partial f}{\partial y_i}e_i.
\end{aligned}
\end{equation*}
By using bases, the gradient operator can also be expressed as $\nabla=\sum\limits_{i}\frac{\partial}{\partial y_i}n_i$.

For a vector field $B$, the divergence is
\begin{equation*}
\begin{aligned}
\mathrm{div} B=\nabla\cdot B=\frac{1}{H_1 H_2 H_3}\sum_{i}\frac{\partial}{\partial y_i}(H_j H_k B_i).
\end{aligned}
\end{equation*}

For a scalar field $f$, the Laplace operator is
\begin{equation*}
\begin{aligned}
\Delta{f}=\mathrm{div}(\mathrm{gard} f)=\frac{1}{H_1 H_2 H_3}\sum_{i}\frac{\partial}{\partial y_i}\left(\frac{H_j H_k}{H_i} \frac{\partial f}{\partial y_i}\right),
\end{aligned}
\end{equation*}
where $i,j,k\in\{1,2,3\}$ and are different from each other.

\begin{example}{Cylindrical coordinate}

We consider the cylindrical coordinate $$(r,\theta,z)$$ with the mapping
$$x=r\cos{\theta},\ y=r\sin{\theta},\ z.$$
By defining covariant basis as
\begin{align*}
&t_1=\frac{\partial (xe_1+ye_2+ze_3)}{\partial r}=\cos{\theta}e_1+\sin{\theta}e_2=e_r,\\
&t_2=\frac{\partial (xe_1+ye_2+ze_3)}{\partial \theta}=-r\sin{\theta}e_1+r\cos{\theta}e_2=re_{\theta},\\
&t_3=\frac{\partial (xe_1+ye_2+ze_3)}{\partial z}=e_3=e_z,
\end{align*}
it is straightforward to derive the contravariant basis as
\begin{equation*}
n_1=e_r,\ n_2=\frac{1}{r}e_{\theta},\ n_3=e_z,
\end{equation*}
The Lame coefficients are obtained from covariant basis as
$$H_1=1,\ H_2=r,\ H_3=1,$$
Then the quadratic form is
$$ds^2=dr^2+r^2d\theta^2+dz^2,$$
the volume element is
$$dV=rdr d\theta dz,$$
the gradient, divergence and Laplace operator are
\begin{align*}
&\mathrm{grad} f=\nabla f=\frac{\partial f}{\partial r}e_r+\frac{1}{r}\frac{\partial f}{\partial \theta}e_{\theta}+\frac{\partial f}{\partial z}e_z,\\
&\mathrm{div} B=\frac{1}{r}\left(\frac{\partial (r B_r)}{\partial r}+\frac{\partial B_{\theta}}{\partial \theta}+\frac{\partial (r B_z)}{\partial z}\right),\\
&\Delta{f}=\frac{1}{r}\left(\frac{\partial}{\partial r}\left(r \frac{\partial f}{\partial r}\right)+\frac{\partial}{\partial \theta}\left(\frac{1}{r} \frac{\partial f}{\partial \theta}\right)+\frac{\partial}{\partial z}\left(r \frac{\partial f}{\partial z}\right)\right).
\end{align*}
\end{example}

\section{Propositions}\label{app:1}

\begin{proposition}\label{brapro}\cite{Olver1986}
$K(x)$ is the structure matrix for a Poisson bracket if and only if it has the following properties:\\
(1) Skew-symmetry: $$K_{ij}(x)=-K_{ji}(x),\ i,j=1,\cdots,m,$$
(2) Jacobi identity:
\begin{equation*}
\sum_{l=1}^{m}\left(\frac{\partial K_{ij}}{\partial x_l} K_{lk}+\frac{\partial K_{jk}}{\partial x_l} K_{li}+\frac{\partial K_{ki}}{\partial x_l} K_{lj}\right)=0.
\end{equation*}
\end{proposition}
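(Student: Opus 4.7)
The plan is to work directly from the definition of the bracket associated to a structure matrix. Given $K(x)$ one sets, for smooth $F,G$ on $\mathbb{R}^m$,
\[
\{F,G\}(x)=\sum_{i,j=1}^{m} K_{ij}(x)\,\frac{\partial F}{\partial x_i}\frac{\partial G}{\partial x_j}.
\]
By construction this bracket is $\mathbb{R}$-bilinear and, because it is a first-order differential operator in each slot, it automatically obeys the Leibniz rule. Hence the only non-trivial axioms to check are antisymmetry and the Jacobi identity, and they will correspond one-to-one to the two matrix conditions.

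For the skew-symmetry half I would simply swap $F$ and $G$ in the defining sum and relabel $i\leftrightarrow j$, yielding $\{G,F\}=\sum_{i,j}K_{ji}\,\partial_i F\,\partial_j G$. Demanding $\{F,G\}=-\{G,F\}$ for \emph{all} smooth $F,G$ forces $K_{ji}(x)=-K_{ij}(x)$ pointwise, and conversely the pointwise skew-symmetry of $K$ yields the antisymmetry of the bracket. The "for all $F,G$" step is justified by the standard fact that at any fixed $x$ one can choose smooth test functions realizing any prescribed gradient vectors.

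The core of the argument is the Jacobi identity. My plan is to expand
\[
\{\{F,G\},H\}=\sum_{k,l}K_{kl}\,\partial_k\!\Bigl(\sum_{i,j}K_{ij}\,\partial_i F\,\partial_j G\Bigr)\partial_l H
\]
and split the result into two pieces: a \textbf{K-derivative piece} that is trilinear in $\partial F,\partial G,\partial H$ with coefficient involving $\partial_l K_{ij}\,K_{lk}$, and a \textbf{second-derivative piece} linear in Hessians of $F$ or $G$. Writing out the two cyclic permutations $\{\{G,H\},F\}$ and $\{\{H,F\},G\}$ in the same way and summing all three, the second-derivative contributions cancel pairwise thanks to the skew-symmetry of $K$ established in the previous step — this is the classical cancellation making the cyclic sum a \emph{first}-order operator in $F$, $G$, $H$ even though each cyclic term is second order. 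What survives is precisely
\[
\sum_{i,j,k}\Bigl(\sum_{l}\partial_l K_{ij}\,K_{lk}+\partial_l K_{jk}\,K_{li}+\partial_l K_{ki}\,K_{lj}\Bigr)\partial_i F\,\partial_j G\,\partial_k H.
\]
Because $F,G,H$ can be chosen with independently prescribed gradients at any point $x$, the vanishing of this trilinear form for all choices is equivalent to the pointwise identity (2), giving both implications simultaneously.

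The main obstacle is the bookkeeping in the cancellation of the second-derivative piece: one has to enumerate the six terms produced by the three cyclic brackets, regroup them according to which function carries the Hessian, and apply $K_{ij}=-K_{ji}$ together with an appropriate index swap inside each pair to see them collapse. Once this cancellation is executed cleanly, the remaining equivalence between the surviving trilinear expression and the matrix Jacobi identity is a routine application of the "arbitrary gradient" argument already used for skew-symmetry.
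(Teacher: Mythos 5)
Your proposal is correct, but note that the paper itself offers no proof of this proposition: it is stated in Appendix~B as a quoted result with a citation to Olver (1986). Your argument --- defining $\{F,G\}=\sum_{i,j}K_{ij}\,\partial_i F\,\partial_j G$, reducing bracket antisymmetry to pointwise skew-symmetry of $K$ via the arbitrary-gradient argument, and then showing that in the cyclic sum $\{\{F,G\},H\}+\{\{G,H\},F\}+\{\{H,F\},G\}$ the Hessian terms cancel by skew-symmetry so that only the trilinear form with coefficient $\sum_l\bigl(\partial_l K_{ij}K_{lk}+\partial_l K_{jk}K_{li}+\partial_l K_{ki}K_{lj}\bigr)$ survives --- is precisely the classical proof given in that reference, and the bookkeeping you flag does close as claimed (symmetrizing the Hessian indices and using $K_{ij}=-K_{ji}$ kills each pair). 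The only point worth making explicit is that the equivalence of the Jacobi identity with condition (2) is proved \emph{under} condition (1), which is harmless for the stated ``if and only if'' since a Poisson bracket must satisfy both axioms anyway.
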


\begin{proposition}\label{mang}
The matrix
\begin{equation*}
\hat{\bm{B}}=\left(
            \begin{array}{ccc}
              0 & b_3 & -b_2 \\
              -b_3 & 0 & b_1 \\
              b_2 & -b_1 & 0 \\
            \end{array}
          \right)
\end{equation*}
with respect to the magnetic field $\bm{B}:=[b_1,b_2,b_3]^\top$ satisfies
\begin{equation*}
\hat{\bm{B}}^3=-b^2\hat{\bm{B}},\quad (\bm{B}\cdot y)\bm{B}=\hat{\bm{B}}^2 y+b^2 y,
\end{equation*}
$\vert \bm{B}\vert=b$ and it holds for any vector $y\in\mathbb{R}^{3}$.
\end{proposition}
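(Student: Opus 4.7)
The plan is to interpret $\hat{\bm{B}}$ as a cross-product matrix and then reduce both identities to the standard BAC--CAB (vector triple product) formula.

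First, I would observe by direct inspection of the entries that for every $y\in\mathbb{R}^3$ the action of $\hat{\bm{B}}$ reproduces a cross product, namely
\begin{equation*}
\hat{\bm{B}}\,y \;=\; y\times \bm{B}.
\end{equation*}
This follows from writing out the three components of $\hat{\bm{B}} y$ and comparing with the components of $y\times\bm{B}$; it is a one-line verification that I would record but not belabor.

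Next, I would apply $\hat{\bm{B}}$ twice and invoke the vector triple product identity $\bm{B}\times(y\times\bm{B}) = (\bm{B}\cdot\bm{B})\,y - (\bm{B}\cdot y)\,\bm{B}$. Concretely,
\begin{equation*}
\hat{\bm{B}}^2 y \;=\; \hat{\bm{B}}(y\times\bm{B}) \;=\; (y\times\bm{B})\times\bm{B} \;=\; -\bm{B}\times(y\times\bm{B}) \;=\; (\bm{B}\cdot y)\bm{B} - b^2 y,
\end{equation*}
which, after rearrangement, gives the second identity $(\bm{B}\cdot y)\bm{B} = \hat{\bm{B}}^2 y + b^2 y$. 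Since this holds for every $y$, it is an identity between the operators themselves.

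For the first identity I would apply $\hat{\bm{B}}$ one more time to the formula just obtained:
\begin{equation*}
\hat{\bm{B}}^3 y \;=\; \hat{\bm{B}}\bigl((\bm{B}\cdot y)\bm{B} - b^2 y\bigr) \;=\; (\bm{B}\cdot y)(\bm{B}\times\bm{B}) - b^2 \hat{\bm{B}} y \;=\; -b^2 \hat{\bm{B}} y,
\end{equation*}
using $\bm{B}\times\bm{B}=0$. Since this holds for arbitrary $y$, the operator identity $\hat{\bm{B}}^3=-b^2\hat{\bm{B}}$ follows. There is no real obstacle here; the only thing to be careful about is the sign convention in the definition of $\hat{\bm{B}}$ (note the off-diagonal sign pattern differs from the more common convention $\hat{\bm{B}} y=\bm{B}\times y$), which is precisely what determines whether one ends up with $y\times\bm{B}$ or $\bm{B}\times y$. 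Once that convention is fixed by the one-line component check above, both identities collapse to immediate consequences of the triple product formula.
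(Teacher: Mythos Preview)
Your proof is correct. The paper states this proposition in the appendix without supplying any proof, treating it as an elementary fact, so there is nothing to compare against; your approach via the identification $\hat{\bm{B}}\,y = y\times\bm{B}$ and the vector triple product formula is the standard and cleanest way to establish both identities.
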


\bibliographystyle{plain}
\bibliography{refs}

\end{document}